\title{Torsion-free Word Hyperbolic Groups are Noncommutatively Slender}
\author{Samuel M. Corson}
\theoremstyle{definition}\newtheorem{theorem}{Theorem}
\theoremstyle{definition}
\theoremstyle{definition}
\theoremstyle{definition}\newtheorem{definition}[theorem]{Definition}
\theoremstyle{definition}
\theoremstyle{definition}
\theoremstyle{definition}
\theoremstyle{definition}
\theoremstyle{definition}\newtheorem{lemma}[theorem]{Lemma}
\theoremstyle{definition}
\theoremstyle{definition}
\theoremstyle{definition}
\theoremstyle{definition}
\newtheorem*{question*}{Question}
\newcommand{\W}{\mathcal{W}}
\newcommand{\HEG}{\textbf{HEG}}
\begin{document}

\address{Mathematics Department\\
1326 Stevenson Center\\
Vanderbilt University\\
Nashville, TN 37240\\
USA}			

\email{samuel.m.corson@vanderbilt.edu}
\keywords{fundamental group, Hawaiian Earring, slender}
\subjclass{14F35, 03E15}

\maketitle

\begin{abstract}  In this note we prove the claim given in the title.  A group $G$ is noncommutatively slender if each map from the fundamental group of the Hawaiian Earring to $G$ factors through projection to a canonical free subgroup.  Graham Higman, in his seminal 1952 paper \cite{H}, proved that free groups are noncommutatively slender.  Such groups were first defined by K. Eda in \cite{E}.  Eda has asked which finitely presented groups are noncommutatively slender. This result demonstrates that random finitely presented groups in the few-relator sense of Gromov are noncommutatively slender.
\end{abstract}

\begin{section}{Introduction}  Certain groups that allow for infinite multiplication exhibit a curious behavior, namely that maps to particular well understood groups are always boring.  This phenomenon was first noticed by Specker in \cite{S}, who proved that for each integer valued homomorphism from the countable product of integers $\phi: \prod_{\omega} \mathbb{Z} \rightarrow \mathbb{Z}$ there exists a natural number $N$ such that the projection $p_N: \prod_{\omega} \mathbb{Z} \rightarrow \prod_{n=0}^{N} \mathbb{Z}$ satisfies $\phi = \phi \circ p_N$.  The phenomenon was studied by \L os and lead him to define a slender group to be a torsion-free abelian group $A$ for which any homomorphism $\phi: \prod_{\omega} \mathbb{Z} \rightarrow A$ has an $N \in \omega$ for which $\phi = \phi\circ p_N$.  These groups have been extensively studied and also classified via subgroups (see for example \cite{F} volume 2, Sections 94, 95).

The term noncommutatively slender (we will use the contraction n-slender to be short) was introduced by K. Eda. The first examples of such groups were free groups as demonstrated by Higman (in \cite{H}) 40 years before Eda defined such groups.  Eda's idea is essentially the same as with the abelian case, with the domain being replaced by the fundamental group of the Hawaiian Earring.  He showed that n-slender groups are necessarily torsion-free and all abelian n-slender groups are indeed slender in the abelian sense.  Also, the class of n-slender groups is closed under direct sums and free products (see \cite{E} for the definition and such results).  The Hawaiian Earring group can be used to endow a group with an infinite multiplicative structure, thus n-slender groups are those which resist such a structure.

In contrast to slender groups, no nice characterization for n-slender groups via subgroups is known.  Eda has noted that n-slenderness is an open question even for finitely presented groups \cite{E'}.  This note gives a broad class of finitely presented groups which are n-slender, and shows that most finitely generated groups are n-slender in the few-relator sense of Gromov \cite{G}.  The theorem is the following:

\begin{theorem}\label{main}  If $G$ is a a torsion-free word hyperbolic group then $G$ is n-slender.
\end{theorem}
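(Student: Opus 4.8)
The plan is to argue by contradiction, extracting from a putative ``non-slender'' homomorphism a configuration that is incompatible with the geometry of a $\delta$-hyperbolic Cayley graph. Recall that $\mathbb{H} = \pi_1(\HEG)$ carries, for each $n$, a retraction $p_n : \mathbb{H} \to F_n$ onto the free group on the first $n$ loops, obtained by deleting from a reduced (possibly infinite) word every occurrence of a generator $x_m$ with $m \geq n$; since each index occurs only finitely often, $p_n$ is well defined. To prove $G$ is n-slender it suffices to show that every homomorphism $h : \mathbb{H} \to G$ factors as $h = \bar h \circ p_n$ for some $n$, equivalently that $h$ kills $\ker p_n$. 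First I would suppose, for contradiction, that no such $n$ exists, so that for every $n$ the restriction of $h$ to the tail subgroup $\mathbb{H}_{\geq n}$ (words using only letters $x_m$ with $m \geq n$) is nontrivial. A diagonal extraction in the spirit of Higman \cite{H}, using that each tail subgroup is again a copy of $\mathbb{H}$, should then assemble these witnesses into a single reduced infinite word $w = x_{n_1}^{\varepsilon_1} x_{n_2}^{\varepsilon_2}\cdots$ with $n_1 < n_2 < \cdots$ all of whose tails $w^{(k)} = x_{n_k}^{\varepsilon_k} x_{n_{k+1}}^{\varepsilon_{k+1}} \cdots$ satisfy $h(w^{(k)}) \neq 1$. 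Combining the witnesses requires care, but it is not the main obstacle.

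Next I would import the hyperbolic geometry. Fixing a finite generating set, the Cayley graph $\Gamma$ is $\delta$-hyperbolic and $G$ acts on it properly, cocompactly, and by isometries, with word length $|\cdot|$. The structural facts I expect to exploit are those special to the torsion-free case: every nontrivial element has infinite order and acts as a hyperbolic isometry, so $n \mapsto g^n$ is a quasigeodesic and $|g^n|$ grows linearly; the centralizer of a nontrivial element is infinite cyclic; and no nontrivial element is infinitely divisible, since $|g| = |x^n| \asymp n\,|x|$ is impossible for fixed $g$ unless $x = 1$. The point is that because $h$ is defined on all of $\mathbb{H}$, the images of $w$ and of every regrouping, subproduct, and conjugate of $w$ are already determined and must be mutually consistent. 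Writing $a_k = h(w^{(k)}) \neq 1$, the defining relations of $\mathbb{H}$ give the telescoping identities $a_k = h(x_{n_k})^{\varepsilon_k}\, a_{k+1}$, together with the analogous identities arising from the other admissible parsings of the infinite product, each of which $h$ must respect.

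The heart of the argument, and the step I expect to be the main obstacle, is a geometric lemma asserting that such a family $(a_k)$ cannot exist in a torsion-free hyperbolic group. The idea is that consistency across \emph{all} parsings of $w$ forces the $a_k$ to be simultaneously bounded and unbounded in $|\cdot|$. Slim-triangle and thin-quadrilateral estimates applied to geodesics realizing $a_k = h(x_{n_k})^{\varepsilon_k} a_{k+1}$ should show that either the tail contributions $h(x_{n_k})$ are eventually trivial, contradicting the choice of $w$, or else the partial products drift to infinity along a single quasigeodesic direction, producing a nontrivial element that is conjugate to a proper power of itself by arbitrarily large elements, hence infinitely divisible or with a non-cyclic centralizer --- impossible by the facts above. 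The delicate point is to control the interaction between the $\delta$-thin geometry and the infinitely many admissible regroupings of $w$ at once, rather than through any single relation.

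Finally, I would assemble the pieces: the geometric lemma makes the contradiction hypothesis untenable, so $h$ kills some $\ker p_n$ and factors through $p_n$, proving Theorem \ref{main}. As a sanity check, in the case of a finitely generated free group the argument should reduce to Higman's theorem \cite{H}, and it should remain compatible with Eda's closure properties \cite{E}. Since torsion-free word hyperbolicity holds with overwhelming probability for finitely presented groups in Gromov's few-relator model \cite{G}, the theorem then yields the stated consequence that random finitely presented groups are n-slender.
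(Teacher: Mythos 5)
Your high-level skeleton (argue by contradiction, extract witnesses $W_k$ with $h(W_k)\neq 1$ from the tail subgroups, assemble them into one infinite word, then contradict hyperbolic geometry) matches the paper's strategy in spirit, but the word you build is the wrong one, and the geometric lemma you hope to prove about it is false. A straight concatenation $w = W_1W_2W_3\cdots$ (whether of letters or of witness words) imposes on a homomorphism only the telescoping identities $a_k = g_k a_{k+1}$, where $g_k = h(W_k)$ and $a_k$ is the image of the $k$-th tail: the subwords of $w$ are just the finite blocks $W_i\cdots W_j$ and the tails, and every relation among their images is generated by these. Such a family $(a_k)$ with all $a_k \neq 1$ exists in \emph{every} infinite group: choose $a_1$ avoiding the countably many partial products $g_1g_2\cdots g_k$ and set $a_{k+1} = g_k^{-1}\cdots g_1^{-1}a_1$. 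In particular these configurations exist in $\mathbb{Z}$ and in free groups, which are n-slender (the latter by Higman \cite{H}); so no slim-triangle or thin-quadrilateral estimate can exclude them in a torsion-free hyperbolic group. The obstruction to a bad homomorphism simply cannot be extracted from a plain concatenation; it must come from a word whose subwords include \emph{powers}, because growth under taking powers is the one thing a length function on a hyperbolic group can be made to detect uniformly.

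This is exactly how the paper proceeds, in two separated steps that your sketch conflates. First (Theorem \ref{u.m.}, Higman's scheme adapted): call a length function $l$ universally monotone if for every $r$ there is $K_r$ with $l(g^{K_r}) \geq l(g)+r$ for \emph{all} $g \neq 1$. Given such an $l$, one builds the nested word $U = W_1(W_2(W_3(\cdots)^{k_3})^{k_2})^{k_1}$, i.e. $U_{p-1} = W_pU_p^{k_p}$, with $k_p$ chosen so that $l(g^{k_p}) \geq l(g) + l(\phi([W_p])) + 1$ for all $g\neq 1$; since $\phi$ must respect $\phi([U_{p-1}]) = \phi([W_p])\phi([U_p])^{k_p}$, the triangle inequality gives $l(\phi([U_{p-1}])) \geq l(\phi([U_p]))+1$, hence $l(\phi([U_0])) \geq p$ for every $p$, a contradiction. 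Note the quantifier your draft misses: $k_p$ must work for all $g$ simultaneously, because $\phi([U_p])$ depends on the entire (not yet determined) tail of the construction. The facts you invoke --- $l(g^n)$ grows linearly, cyclic centralizers, no infinite divisibility --- are true but hold with constants depending on $g$, so they cannot supply this uniformity. Producing it is the real geometric content, and it is the paper's Section 3 theorem: via Ol'shanskii's Lemmas \ref{ol1} and \ref{ol2} (geodesic polygons and periodic words) and a case analysis on a shortest conjugate $h$ of $g$ and a shortest conjugating element $x$, a torsion-free hyperbolic group admits a single $N$ with $l_X(g^N) > l_X(g)$ for every $g \neq 1$; iterating over powers of $N$ yields universal monotonicity, and Theorem \ref{u.m.} then proves Theorem \ref{main}. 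Without these two ingredients --- nested powers in the word, and a power-growth estimate uniform over all of $G-\{1\}$ --- your outline cannot be completed.
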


The hypothesis torsion-free cannot be dropped, as any group with torsion fails to be n-slender (\cite{CS} provides some interesting examples of homomorphisms to torsion).  The proof of the theorem uses an interesting theorem about torsion-free hyperbolic groups and a modification of a theorem in \cite{H}.  In Section 2 some background definitions and results are provided.  We define the universal monotone condition (u.m.), which is that there exists a length function on the group such that for any real number $r$ there exists a power $K_r$ such that $g^{K_r}$ is of length at least $r$ for any $g \neq 1$.  We show that this condition is sufficient to imply that a group is n-slender.  In Section 3 we prove torsion-free hyperbolic groups are u.m. which concludes the proof of Theorem \ref{main}.

In Section 4 we prove the following theorem, which generalizes the fact that n-slender groups are closed under direct sums and direct products:

\begin{theorem}\label{Graphprod}  The class of n-slender groups is closed under taking graph products.
\end{theorem}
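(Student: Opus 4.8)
The plan is to reduce the statement to a closure property under a single graph‑product ``vertex splitting'' and then to run a Higman‑type argument on the associated Bass--Serre tree. Write $\mathcal{H}$ for the Hawaiian Earring group and, for $n\in\omega$, write $\mathcal{H}_{\geq n}\leq\mathcal{H}$ for the subgroup of elements carried by the circles indexed $\geq n$; by the definition of n‑slenderness recalled in Section 2 it suffices, given a homomorphism $h\colon\mathcal{H}\to G$ into a graph product $G=G_\Gamma$ of n‑slender groups $\{G_v\}_{v\in V(\Gamma)}$, to produce a single $n$ with $h(\mathcal{H}_{\geq n})=\{1\}$. I first note that the two known closures already dispose of the extreme cases: when $\Gamma$ has no edges $G_\Gamma$ is a free product, and when $\Gamma$ is complete it is a restricted direct product. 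These do not combine to give all graphs, however, since iterating joins and disjoint unions produces only cographs (the path on four vertices is the first obstruction), so a genuinely new argument is required.

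The engine of the proof is the standard splitting of a graph product along a vertex. Fixing $v\in V(\Gamma)$ and letting $L$ be its link, one has
\[
G_\Gamma \;=\; G_{\Gamma\setminus v}\;*_{G_L}\;\bigl(G_v\times G_L\bigr),
\]
an amalgamated free product in which the edge group $G_L$ is a \emph{retract} of both factors (kill the vertices outside $L$ on the left, project away $G_v$ on the right). Crucially, the graph‑product structure also furnishes retractions $\alpha\colon G_\Gamma\to G_{\Gamma\setminus v}$ and $\beta\colon G_\Gamma\to G_v\times G_L$ onto the two factors, obtained by sending the omitted vertex groups to $1$. For finite $\Gamma$ I would now induct on $|V(\Gamma)|$: both factors are graph products over strictly fewer vertices (using that $G_v\times G_L$ is a direct sum of $G_v$ with such a graph product), hence n‑slender by the inductive hypothesis together with the known closure of the n‑slender class under direct sums. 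Composing $h$ with these n‑slender factors via $\alpha$ and $\beta$ yields some $n_0$ with $\alpha h(\mathcal{H}_{\geq n_0})=\beta h(\mathcal{H}_{\geq n_0})=\{1\}$, so that $h$ carries the tail $\mathcal{H}_{\geq n_0}$ into $\ker\alpha\cap\ker\beta$.

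It remains to upgrade ``killed by both retractions on a tail'' to ``killed on a tail,'' and this is the step I expect to be the main obstacle. Here I would pass to the Bass--Serre tree $T$ of the splitting and study the action of $\mathcal{H}$ induced through $h$. The combinatorial translation length $\tau$ on $T$ is homogeneous, $\tau(g^{K})=K\tau(g)$, so it behaves like the length functions of Section 2, and indeed better: a Higman‑type argument should forbid $\mathcal{H}$ from sustaining unbounded hyperbolic behaviour and produce $n_1\geq n_0$ so that every element of $h(\mathcal{H}_{\geq n_1})$ is elliptic, i.e. conjugate into $G_{\Gamma\setminus v}$ or into $G_v\times G_L$. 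Controlling the conjugators by a further reduction and feeding the elliptic images back through $\alpha,\beta$ and the n‑slenderness of the two factors then pins down a single $n\geq n_1$ with $h(\mathcal{H}_{\geq n})=\{1\}$. The delicate points are the simultaneous control of the conjugating elements and the uniformity of the bound, both of which are exactly what the retract hypothesis on $G_L$ is designed to supply, since it prevents the cancellation that would otherwise destroy the length estimate.

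Finally, for infinite $\Gamma$ the induction on vertices does not terminate, and I would instead reduce to the finite case through the finite‑support retractions $\rho_S\colon G_\Gamma\to G_{\Gamma[S]}$ onto graph products over finite induced subgraphs $\Gamma[S]$. Since every element of $G_\Gamma$ has finite support, it suffices to bound matters uniformly across all finite $S$; establishing that uniform bound—equivalently, that only finitely many generators of $\mathcal{H}$ survive and that their supports have finite union—is of the same character as the obstacle above and is where the real work lies. Specialising the completed result to edgeless and to complete graphs recovers Eda's closure of the n‑slender class under free products and under direct sums, as expected.
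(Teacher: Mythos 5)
Your plan is genuinely different from the paper's proof, but at its center it has a real gap, which you yourself flag: the passage from ``the tail $\mathcal{H}_{\geq n_0}$ lands in $\ker\alpha\cap\ker\beta$'' to ``some tail dies'' is never proved, and the tool you propose for it does not work off the shelf. The combinatorial translation length $\tau$ on the Bass--Serre tree is \emph{not} a length function in the sense of Section 2: it fails subadditivity (for hyperbolic elements with disjoint axes one has $\tau(gh)=\tau(g)+\tau(h)+2d$, where $d$ is the distance between the axes), and it vanishes on every elliptic element, not just the identity. Higman's argument (Theorem \ref{u.m.}) uses both the triangle inequality and the implication $l(g)=0\Rightarrow g=1$ in an essential way, so no ``Higman-type argument'' applies to $\tau$ without substantial new work. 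Moreover, the assertion you need --- that every element of the image of some tail is elliptic --- is, in the edgeless case $\Gamma$ with no edges, essentially Eda's theorem that n-slender groups are closed under free products; so this step is comparable in depth to the theorem you are proving and cannot be left as a plausibility claim. (Your implicit follow-up observation is correct and worth keeping: an elliptic element of $\ker\alpha\cap\ker\beta$ is trivial, since a conjugate $xfx^{-1}$ with $f$ in a factor and killed by the retraction onto that factor forces $f=1$. The unproved ellipticity is the whole difficulty.) The infinite-graph reduction has the same status: you acknowledge that the required uniformity over finite induced subgraphs ``is where the real work lies,'' i.e., it is another instance of the same missing argument.

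For contrast, the paper avoids trees and induction entirely. It uses the short exact sequence $1\rightarrow\ker\sigma\rightarrow G\rightarrow\bigoplus_{v\in V}G_v\rightarrow 1$, where $\sigma$ is the natural surjection onto the direct sum: the quotient is n-slender by Eda's direct-sum theorem, n-slenderness is closed under extensions (Lemma \ref{extension}), and the kernel is shown to be universally monotone --- in fact $l(g^2)>l(g)$ for every nontrivial $g\in\ker\sigma$ --- by a normal-form argument with Green's reduced words (Lemma \ref{Green}), permuting commuting syllables to exhibit a non-cancelling middle block in $g^2$. This handles arbitrary (possibly infinite) vertex sets in one stroke, precisely the two places where your plan is incomplete. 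If you want to salvage your approach, the missing ingredient is a genuine theorem about actions of $\mathcal{H}$ (or of its tails) on trees forcing ellipticity; proving that would amount to redoing, in Bass--Serre language, the work the paper does with reduced words.
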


%

In Section 5 we motivate the question of whether Thompson's group $F$ is n-slender.  We also show that the natural length function defined by the characterization of $F$ as a diagram group is not u.m.  Finally in Section 6 we give a family of examples to show that even very uncompicated n-slender groups can fail to be u.m.

\end{section}

\begin{section}{Non-commutatively Slender Groups}

In this section we give a characterization of the Hawaiian Earring fundamental group, define n-slenderness, and prove a modification of Higman's original theorem of the n-slenderness of free groups.

The Hawaiian Earring is a shrinking wedge of countably-infinitely-many circles.  More formally, given a point $p\in \mathbb{R}^2$ and $r\in (0, \infty)$ we let $C(p, r)$ denote the circle centered at $p$ of radius $r$ and define the Hawaiian Earring to be the subspace $E=\bigcup_{n\in \omega} C((0, \frac{1}{n+2}),\frac{1}{n+2})$ of $\mathbb{R}^2$.  Naively the fundamental group of $E$ might seem to be a free group of countably infinite rank (one free generator for each circle in the union defining $E$), but the fundamental group is in fact uncountable.  We give a combinatorial characterization of this group using countable words.

Let $\{a_n^{\pm 1}\}_{n=0}^{\infty}$ be a countably infinite set with formal inverses, the elements of which we call \textbf{letters}.  A map $W: \overline{W}\rightarrow \{a_n^{\pm 1}\}_{n=0}^{\infty}$ from a countable totally ordered set $\overline{W}$ is a \textbf{word} if for each $n\in \omega$ the set $W^{-1}(\{a_n^{\pm 1}\})$ is finite.  Two words $U$ and $V$ are isomorphic, denoted $U \simeq V$, if there exists an order isomorphism of the domains of each word $f: \overline{U} \rightarrow \overline{V}$ such that $U(t) = V(f(t))$.  We identify isomorphic words.  The class of isomorphic words is a set of cardinality continuum which we denote $\W$.  For each $N\in \omega$ there is a projection map $p_N$ to the set of finite words given by letting $p_N(W) = W|\{t\in \overline{W}: W(t) \in \{a_n^{\pm 1}\}_{n=0}^{N}\}$.  For words $U, V\in \W$ we let $U \sim V$ if for each $N\in \omega$ we have that $p_N(U) = p_N(V)$ in the free group $F(\{a_0, \ldots, a_N\})$.  This is an equivalence relation.  For each word $U$ there is an inverse word $U^{-1}$ whose domain is the totally ordered set $\overline{U}$ under the reverse order and $U^{-1}(t) = U(t)^{-1}$.  Given two words $U, V\in \W$ there is a natural way to form the concatenation $UV$.  In particular, one takes the domain of $UV$ to be the disjoint union of $\overline{U}$ with $\overline{V}$, with order extending that of $\overline{U}$ and $\overline{V}$ and placing all elements of $\overline{U}$ before those of $\overline{V}$, and $UV(t) = \begin{cases}U(t)$ if $t\in \overline{U}\\V(t)$ if $t\in \overline{V}  \end{cases}$.  The set $\W/\sim$ now has a group structure with binary operation given by $[U][V] = [UV]$, inverses defined by $[U]^{-1} = [U^{-1}]$ and the trivial element given by the equivalence class of the empty word.

Let $\HEG$ denote the group $\W/\sim$.  The free group $F(\{a_0, \ldots, a_N\})$, which we shall denote $\HEG_N$, may be though of as a subgroup in $\HEG$ in the obvious way.  Moreover, the word map $p_N$ defines a group retraction $\HEG \rightarrow   \HEG_N$ which we denote $p_N$ by abuse of notation.  There is another word map $p^N$ given by $p^N(W) = W|\{t\in \overline{W}: W(t) \in \{a_n^{\pm 1}\}_{n=N+1}^{\infty}\}$ which gives a group retraction from $\HEG$ to the subgroup $\HEG^N$ consisting of those equivalence classes which contain words involving no letters in $\{a_n^{\pm 1}\}_{n=0}^N$.  We again abuse notation by calling this retraction $p^N$.  There is a canonical isomorphism $\HEG \simeq \HEG_N \ast \HEG^N$ obtained by considering a word $W$ as a concatenation of finitely many words in the letters $\{a_n^{\pm 1}\}_{n=0}^N$ and finitely many words in the letters $\{a_n^{\pm 1}\}_{n=N+1}^{\infty}$.

\begin{definition}  A group $G$ is noncommutatively slender (or n-slender) if for each homomorphism $\phi: \HEG \rightarrow G$ there exists $N\in \omega$ such that $\phi = \phi\circ p_N$.
\end{definition}

In other words, $G$ is n-slender if for each homomorphism $\phi:\HEG \rightarrow G$ there exists $N$ so that the restriction of $\phi$ to $\HEG^N$ is the trivial homomorphism.

For our purposes a \textbf{length function} on a group $G$ is a map $l: G \rightarrow \mathbb{R}$ such that the following hold:

\begin{enumerate}

\item $l(g) \geq 0$ with $l(1) = 0$

\item $l(g) = l(g^{-1})$

\item $l(gh) \leq l(g) + l(h)$

\end{enumerate}

As an example, if $G$ has a generating set $X$ one may define a length function $l_X$ on $G$ by letting $l_X(g)$ be the distance from $1$ to $g$ in the Cayley graph $\Gamma(G, X)$.  In other words, $l_X(g)$ is the length of a minimal word in the generators $X$ that is needed to represent the group element $g$ in $G$.  We say that a length function $l$ is \textbf{universally monotone} if for each $r\in \mathbb{R}$ there exists $K_r \in \omega$ such that for each $g\in G-\{1\}$ we have that $l(g^{K_r}) \geq l(g) + r$.  In particular, for a universally monotone length function we have $l(g) = 0$ if and only if $g=1$.  We say a group is \textbf{universally monotone} (or \textbf{u.m.}) if it has a length function which is universally monotone.

The following is the critical theorem, the ideas of which are in \cite{H}:

\begin{theorem}\label{u.m.} If the group $G$ is u.m. then $G$ is n-slender.
\end{theorem}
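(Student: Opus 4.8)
The plan is to argue by contradiction. Assume $G$ carries a universally monotone length function $l$ and that $\phi\colon\HEG\to G$ is a homomorphism that factors through no $p_N$; equivalently, the restriction $\phi|_{\HEG^N}$ is nontrivial for every $N$. The goal is to distil this failure into a single nontrivial element of $G$ that is forced to have $l$-length larger than every real number, which is absurd because $l$ is $\mathbb{R}$-valued. A preliminary observation that will be used repeatedly is that $G$ is torsion-free: if $1\neq g$ had finite order $m$, then for the residue $s$ of $K_r$ modulo $m$ we would have $l(g^{K_r})=l(g^{s})\le (m-1)\,l(g)$, a bound independent of $r$, contradicting $l(g^{K_r})\ge l(g)+r$ for large $r$. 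Hence every nontrivial element of $G$ has infinite order, and in particular $g^{e}\neq 1$ whenever $g\neq 1$ and $e\neq 0$.

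First I would extract witnesses of ever higher support. For each $M$ the canonical splitting $\HEG\simeq\HEG_M\ast\HEG^M$ lets me write any $w$ as a finite alternating product of syllables lying in $\HEG_M$ (finitely supported in $\{a_0,\dots,a_M\}$) and in $\HEG^M$, so that $\phi(w)$ is the product of the images of these syllables. Starting from a surviving element of $\HEG^N$ and discarding the finitely supported syllables whenever they are already killed by $\phi$, this procedure shows that the nontrivial image can be realised by elements whose support lies arbitrarily high: I obtain a fixed $y\neq 1$ in $G$ together with elements supported above levels tending to infinity, each mapping to $y$ (and, in the complementary situation where some finitely supported syllable survives at every height, honest finitely supported witnesses in pairwise disjoint blocks). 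In either case the disjointness of finite blocks guarantees that the relevant infinite concatenations are legitimate elements of $\W$, so infinite products of these witnesses are available inside $\HEG$.

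With such a self-similar family in hand, the engine of the proof---this is the step modelled on \cite{H}---is to assemble the witnesses into one master word $W\in\HEG$ whose internal repetition forces $\phi(W)$ to be a $K_r$-th power of a nontrivial element of $G$ for every $r$ simultaneously. Once $\phi(W)=z_r^{\,K_r}$ with $z_r\neq 1$, universal monotonicity gives $l(\phi(W))\ge l(z_r)+r\ge r$ for all $r$, and since $\phi(W)$ is a single element of $G$ this contradicts $l(\phi(W))\in\mathbb{R}$. Universal monotonicity enters twice here: through the torsion-freeness above, and through the choice of the exponents $K_r$, which is exactly what converts repetition into unbounded length.

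The hard part will be this assembly, and the difficulty is genuine. A naive infinite product $W=\prod_i w_i$ never suffices: cutting $W$ after its $j$-th syllable only yields $\phi(W)=\big(\prod_{i\le j}\phi(w_i)\big)\,\phi(T_j)$ for the tail $T_j=\prod_{i>j}w_i$, and although $T_j\to 1$ in the natural topology on $\HEG$, the homomorphism $\phi$ is not continuous, so the tail images $\phi(T_j)$ are a priori unconstrained and simply absorb any length one tries to produce. Nor can one make $W$ literally divisible by all $K_r$ inside $\HEG$, since $\HEG$---like a free group---has no nontrivial infinitely divisible elements; the divisibility must therefore be manufactured in the image $G$ rather than in the source. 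Carrying out Higman's combinatorial surgery on the infinite word so that the required collapse takes place in $G$ while the witnesses remain nontrivial---thereby producing the infinitely divisible image element---is the technical heart of the argument; once it is in place, the universally monotone inequality closes the proof.
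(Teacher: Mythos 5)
Your preliminary reductions are fine but not where the content lies: the witnesses $W_m$ with $\phi([W_m])\neq 1$ and support in $\{a_n^{\pm 1}\}_{n\geq m}$ come directly from the hypothesis that each restriction $\phi|\HEG^{N}$ is nontrivial (no syllable-discarding surgery is needed), and torsion-freeness of a u.m.\ group is correct, though the actual proof never uses it. The genuine gap is the step you yourself flag as ``the technical heart'': you never construct the master word, and the intermediate target you set for it --- a single $W$ with $\phi([W])=z_r^{K_r}$, $z_r\neq 1$, for \emph{every} $r$ simultaneously --- is both unproven and not what Higman's argument delivers. As you correctly observe, $\HEG$ has no nontrivial infinitely divisible elements, so this divisibility would have to be created by collapsing under $\phi$; but the only information available about what $\phi$ collapses is the negation of the conclusion, and nothing in your outline converts that into the required roots $z_r$. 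Deferring exactly this step means no proof has been given, and it is doubtful the argument can be organized around producing such an element at all.

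What the paper does instead never exhibits an infinitely divisible element. Set $r_m=l(\phi([W_m]))$ and choose $k_m$ with $l(g^{k_m})\geq l(g)+r_m+1$ for all $g\neq 1$, then form the \emph{nested} word $U=U_0$ determined by $U_{p-1}=W_pU_p^{k_p}$, informally $U=W_1(W_2(W_3(\cdots)^{k_3})^{k_2})^{k_1}$; this is a legitimate element of $\W$ precisely because the supports of the $W_m$ escape to infinity. Monotonicity is used one exponent at a time on one element: if $\phi([U_p])\neq 1$, then $l(\phi([U_p])^{k_p})\geq l(\phi([U_p]))+r_p+1$, and the triangle inequality gives $l(\phi([U_{p-1}]))\geq l(\phi([U_p]^{k_p}))-l(\phi([W_p]))\geq l(\phi([U_p]))+1$. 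Iterating, $\phi([U_p])\neq 1$ forces $l(\phi([U_0]))\geq p$, so $\phi([U_p])=1$ for all $p>l(\phi([U_0]))$; taking two consecutive such indices, the relation $1=\phi([U_p])=\phi([W_{p+1}])\,\phi([U_{p+1}])^{k_{p+1}}$ collapses to $\phi([W_{p+1}])=1$, contradicting the choice of witnesses. The contradiction is thus a telescoping length inequality played against the finiteness of $l(\phi([U_0]))$, not the existence of an element of infinite length or of arbitrarily divisible image; this nested construction from \cite{H} is exactly the piece your proposal is missing.
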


\begin{proof}  Let $l$ be a universally monotone length function on $G$.  Let $\phi:\HEG \rightarrow G$ be a homomorphism and suppose for contradiction that the restriction of $\phi$ to each $\HEG^N$ is nontrivial.  Select a sequence of words $\{W_m\}_{m=0}^{\infty}$ such that $W_m$ uses only letters in $\{a_n^{\pm 1}\}_{n=m}^{\infty}$ and $\phi([W_m]) \neq 1$.  Let $r_m= l(\phi([W_m]))$ and $k_m\in \omega$ be such that $g\in G-\{1\}$ implies $l(g^{k_m}) \geq l(g)+ r_m +1$.

Consider the word $U = U_0$ defined by the equations $U_{p-1} = W_pU_{p}^{k_m}$.  In other words, one can think of $U$ as being of form $U = W_1(W_2(W_3(\cdots)^{k_3})^{k_2})^{k_1}$.  Notice that if $\phi([U_p]) \neq 1$ we have
\begin{center}
$l(\phi([U_p]^{k_p})) = l((\phi([U_p]))^{k_p}) \geq l(\phi([U_p]))+ r_p +1 = l(\phi([U_p])) + l(\phi([W_p])) +1$

\end{center}

from which we have

\begin{center}  $l(\phi([U_{p-1}])) = l(\phi([W_p])\phi([U_p]^{k_p})) \geq l(\phi([U_p]^{k_p})) - l(\phi([W_p])) \geq l(\phi([U_p]))+1$
\end{center}

Thus $\phi([U_{p-1}]) \neq 1$ and the argument may be repeated.  By induction we get that $l(\phi([U_{p-p'}])) \geq l(\phi([U_{p}])) + p'$ for $p'\leq p$.  Thus if $p> l(\phi([U_0]))$ we have $\phi([U_p])=1$, which gives $1 = \phi([U_p]) = \phi([W_{p+1}])(\phi([U_{p+1}]))^{k_{p+1}} = \phi([W_{p+1}])$, a contradiction.

\end{proof}

\end{section}

\begin{section}{Hyperbolic Groups}

We review some basic concepts related to hyperbolic groups and prove that every torsion-free word hyperbolic group is u.m.

Recall that a metric space $(Z, d)$ is \textbf{hyperbolic} if there exists a $\delta$ such that for all $p, x,y,z\in Z$ we have $$(x,z)_p \geq \min\{(x,y)_p, (y,z)_p\} - \delta$$ where $(x,y)_p = \frac{1}{2}(d(x,p) + d(y,p) - d(x,y))$ is the Gromov product.  A geodesic metric space $(Z, d)$ is hyperbolic if and only if there exists a $\delta$ such that for all points $x,y,z \in S$, and geodesics $[x,y]$ and $[x,z]$,  the points $v\in [x,y]$ and $w\in [x,z]$ satisfying $d(x, v) = d(x,w) = (y,z)_x$ also satisfy $d(v,w) \leq 2\delta$.  The $\delta$ used in the alternative criterion for geodesic spaces is not necessarily the same as in the original definition.  Bounded spaces and the classical hyperbolic metric spaces $\mathbb{H}^n$ are examples of hyperbolic spaces.

A finitely generated group $G$ is word hyperbolic if for some finite generating set $X$ the Cayley graph $\Gamma(G, X)$ is a hyperbolic space under the combinatorial path metric (under which $\Gamma(G, X)$ is a geodesic space).  It turns out that for a hyperbolic group $G$ it is the case that for any finite generating set $X$ the Cayley graph $\Gamma(G, X)$ is hyperbolic.  

Now we fix some notation.  Let $G$ be a group with generating set $X$.  Any word $W$ in the letters $X^{\pm 1}$ gives an element of the group $G$ by performing the necessary multiplication of the letters.  Write $W=_G g$ if the word $W$ represents the element $g\in G$ and $W=_G U$ if the words $W$ and $U$ represent the same element in $G$.  Let $\|W\|$ denote the length of a word $W$ in the letters $X^{\pm 1}$.  Let $l_X$ be the length function induced by $X$ on $G$, that is $l_X(g) = \min\{\|W\|: W =_G g\}$, and by abuse of notation let $l_X(W) = l_X(g)$ where $W=_G g$.  Obviously $l_X(W)\leq \|W\|$.  Given words $V, W$ in the letters $X^{\pm 1}$ we say that $V$ is $W$-periodic if $V$ is a subword of a power of $W^{\pm 1}$.  We say a word $W$ in $X^{\pm 1}$ is \textbf{cyclically minimal} if the equality $W = VUV^{-1}$ in $G$ implies that $l_X(W) \leq \|U\|$.  For $R\in \mathbb{R}$ let $D(R) = \{g\in G: l_X(g) \leq R\}$.

We use the following two results which appear as Lemmas 21 and 26 respectively in \cite{O}:

\begin{lemma}\label{ol1}  Let $G$ be a word hyperbolic group and $\delta$ be a constant such that for all $p, x,y,z\in Z$ we have $(x,z)_p \geq \min\{(x,y)_p, (y,z)_p\} - \delta$. Let $K\geq 14 \delta$ and $K_1 > 12(K+\delta)$ and suppose that a geodesic n-gon $[x_1, \ldots, x_n]$ satisfies the conditions $d(x_{i-1}, x_i) >K_1$ for $i = 2, \ldots, n$ and $(x_{i-2}, x_i)_{x_{i-1}}<K$ for $i = 3, \ldots, n$ (if $n \geq 3$).  Then the polygonal line $p = [x_1, x_2] \cup [x_2, x_3]\cup \cdots \cup [x_{n-1}, x_n]$ is contained in a $2K$-neighborhood of the side $[x_1, x_n]$ and the side $[x_1, x_n]$ is contained in a $14\delta$-neighborhood of $p$.
\end{lemma}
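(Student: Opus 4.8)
The plan is to prove the statement by induction on $n$, reading it as a quantitative ``broken geodesics fellow-travel true geodesics'' lemma. The hypotheses say precisely that the polygonal line $p$ is \emph{locally efficient}: the bound $(x_{i-2}, x_i)_{x_{i-1}} < K$ unwinds to $d(x_{i-2}, x_i) > d(x_{i-2}, x_{i-1}) + d(x_{i-1}, x_i) - 2K$, so traversing two consecutive sides loses at most $2K$ against the straight-line distance, while the long-side bound $d(x_{i-1}, x_i) > K_1$ makes each segment long compared with the hyperbolicity defect $\delta$. The base case $n = 2$ is immediate, since then $p$ and the side $[x_1, x_n]$ coincide.

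For the inductive step I would first apply the inductive hypothesis to the $(n-1)$-gon $[x_1, \ldots, x_{n-1}]$, obtaining that the truncated polygonal line lies in the $2K$-neighborhood of $[x_1, x_{n-1}]$ and that $[x_1, x_{n-1}]$ lies in the $14\delta$-neighborhood of it. The decisive step is then to show that the turning condition \emph{propagates}, namely that the Gromov product $(x_1, x_n)_{x_{n-1}}$ at the penultimate vertex is again bounded by roughly $K$ plus a bounded multiple of $\delta$. The idea is that, because the portion of the broken line visible near $x_{n-1}$ is the long edge $[x_{n-2}, x_{n-1}]$ and the geodesic $[x_1, x_{n-1}]$ stays within $2K$ of it, the geodesic $[x_1, x_{n-1}]$ must enter $x_{n-1}$ along a direction that $\delta$-fellow-travels $[x_{n-2}, x_{n-1}]$ over a length controlled by $K_1$. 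The requirement $K_1 > 12(K + \delta)$ is exactly what makes this fellow-travelling segment long enough to transfer the estimate $(x_{n-2}, x_n)_{x_{n-1}} < K$ into a comparable bound on $(x_1, x_n)_{x_{n-1}}$.

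Granting the propagated bound, I would conclude using two standard $\delta$-hyperbolic estimates for the single triangle $\Delta(x_1, x_{n-1}, x_n)$: first, that the distance from the vertex $x_{n-1}$ to the opposite side $[x_1, x_n]$ agrees with $(x_1, x_n)_{x_{n-1}}$ up to a bounded multiple of $\delta$, placing $x_{n-1}$ within about $K$ of $[x_1, x_n]$; and second, that each side of a geodesic triangle lies in a bounded-$\delta$-neighborhood of the union of the other two. Combining the inductive containment for $[x_1, \ldots, x_{n-1}]$ with this thin-triangle comparison between $[x_1, x_{n-1}] \cup [x_{n-1}, x_n]$ and $[x_1, x_n]$ then yields both the $2K$-neighborhood containment of $p$ inside $[x_1, x_n]$ and the reverse $14\delta$-neighborhood containment.

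I expect the main obstacle to be the constant bookkeeping rather than the qualitative geometry. The overall picture --- a locally efficient broken line with long segments is a quasigeodesic and hence stays uniformly near the connecting geodesic --- is classical, but the statement demands the \emph{exact} thresholds $K \geq 14\delta$ and $K_1 > 12(K+\delta)$ together with the precise neighborhood constants $2K$ and $14\delta$. Keeping the propagated Gromov product strictly below the value that sustains the induction, while preventing the accumulated $\delta$-errors from degrading the final constants, is the delicate point, and it is where one must track every application of the hyperbolic four-point inequality carefully, as in the argument of \cite{O}.
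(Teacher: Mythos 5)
A preliminary remark: the paper contains no proof of this statement at all. It is quoted as Lemma 21 of Ol'shanskii's paper \cite{O} and used as a black box, so your attempt has to be judged as a self-contained argument rather than against an in-paper proof.

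The first half of your plan is sound and can be made rigorous, though more cleanly than by the fellow-travelling heuristic you describe. Assume inductively that $(x_1, x_{n-1})_{x_{n-2}} < K + \delta$. The identity $(x_{n-2}, x_1)_{x_{n-1}} = d(x_{n-2}, x_{n-1}) - (x_1, x_{n-1})_{x_{n-2}}$ together with $d(x_{n-2}, x_{n-1}) > K_1 > 12(K+\delta)$ gives $(x_{n-2}, x_1)_{x_{n-1}} > 11(K+\delta)$; then the four-point inequality at $x_{n-1}$ applied to the triple $x_{n-2}, x_1, x_n$, combined with the polygon hypothesis $(x_{n-2}, x_n)_{x_{n-1}} < K$, forces $(x_1, x_n)_{x_{n-1}} < K + \delta$. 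Note that this bound does \emph{not} degrade as $n$ grows: the polygon hypothesis supplies a fresh bound by the original $K$ at every vertex, and only a single $\delta$ is ever added, so $K+\delta$ is a self-sustaining bound. The same argument run from the other end of the polygon (the hypotheses are symmetric under reversing the vertex order) bounds the products $(x_{i-1}, x_n)_{x_i}$ as well.

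The genuine gap is in your concluding step. You propose to obtain the two containments by combining the inductive containment for the $(n-1)$-gon with a thin-triangle comparison for the triangle on $x_1, x_{n-1}, x_n$. Every such comparison costs an additive error: a fixed multiple of $\delta$ for passing between a side and the union of the other two, and up to roughly $K$ when you must pass from the union $[x_1, x_n] \cup [x_{n-1}, x_n]$ down to the single side $[x_1, x_n]$. If the containment statements themselves are the objects of the induction, this error is incurred once per inductive step, so after $n$ steps you can only conclude containment in a neighborhood of radius $2K + O(n\delta)$, and likewise $14\delta$ inflates to $O(n\delta)$. Since the lemma asserts constants independent of $n$, the induction as you have organized it cannot close; acknowledging the ``accumulated $\delta$-errors'' as the delicate point does not supply a mechanism for avoiding them. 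The repair, which is how the argument of \cite{O} is structured, is to induct only on the metric estimates --- the propagated Gromov products above and the resulting inequality $d(x_1, x_n) > d(x_1, x_{n-1}) + d(x_{n-1}, x_n) - 2(K+\delta)$, which shows the broken line is an additive quasigeodesic --- and then to prove both containments in a single non-inductive step: for an arbitrary point $u$ on an arbitrary side $[x_{i-1}, x_i]$, use the uniform product bounds at $x_{i-1}$ (looking toward $x_1$) and at $x_i$ (looking toward $x_n$) to bound $d(u, [x_1, x_n])$ directly, and argue symmetrically for points of $[x_1, x_n]$. Without this restructuring, your proof does not establish the stated $n$-independent constants.
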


\begin{lemma}\label{ol2}  For every hyperbolic group $G$ with finite generating set $X$ and every $\theta>0$ there exists a number $C$ such that for every $W$-periodic word $V$, where $W$ is a cyclically minimal word with $\|W\| >C$ it is true that $l_X(V) \geq (1-\theta)\|V\|$.
\end{lemma}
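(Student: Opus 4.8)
The plan is to deduce Lemma \ref{ol2} from the polygon estimate of Lemma \ref{ol1}, working inside the geodesic Cayley graph $\Gamma(G,X)$, which is $\delta$-hyperbolic for a suitable $\delta$. I will treat $W$ as geodesic, so that $\|W\| = l_X(W)$, in addition to being of minimal length in its conjugacy class; this is what cyclical minimality amounts to in the situations where the lemma is applied. It suffices to prove the estimate for $V = W^n$: since subwords of the geodesic word $W$ are themselves geodesic, the two partial copies at the ends of a general $W$-periodic word are geodesic and only dilute the bound, so no cancellation is introduced there. Fix $K = 14\delta$ and $K_1 > 12(K+\delta)$ as in Lemma \ref{ol1}, and consider the geodesic polygon whose vertices $x_0, x_1, \ldots, x_n$ are the group elements represented by $W^0, W^1, \ldots, W^n$, so that $d(x_{i-1}, x_i) = l_X(W) = \|W\|$ for every $i$.

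The next step is to verify the two hypotheses of Lemma \ref{ol1} for this polygon, and this is where cyclical minimality does the real work. The side-length condition $d(x_{i-1}, x_i) = \|W\| > K_1$ is guaranteed by taking $C > K_1$. For the Gromov-product condition, left-invariance of the metric reduces every $(x_{i-2}, x_i)_{x_{i-1}}$ to $t := (1, W^2)_W = \|W\| - \frac{1}{2} l_X(W^2)$. Suppose $t > \delta$; thinness of the triangle with vertices $1, W, W^2$ forces the two geodesic sides issuing from $W$, labeled by $W^{-1}$ and by $W$, to $2\delta$-fellow-travel for distance $t$, and unwinding this at the level of words shows that the length-$t$ suffix $s$ and length-$t$ prefix $p$ of $W$ satisfy $l_X(sp) \leq 2\delta$. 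Writing $W = pMs$ and conjugating by the prefix gives $p^{-1}Wp =_G M(sp)$, whence $l_X(p^{-1}Wp) \leq \|M\| + 2\delta = \|W\| - 2t + 2\delta$; minimality in the conjugacy class then forces $\|W\| = l_X(W) \leq \|W\| - 2t + 2\delta$, i.e. $t \leq \delta$, a contradiction. Hence $(x_{i-2}, x_i)_{x_{i-1}} = t \leq \delta < K$, and both hypotheses hold.

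Applying Lemma \ref{ol1} now yields that the polygonal path $\gamma = [x_0, x_1] \cup \cdots \cup [x_{n-1}, x_n]$, of total length $n\|W\| = \|V\|$, lies in the $2K$-neighborhood of the geodesic $[x_0, x_n]$, whose length is precisely $l_X(W^n) = l_X(V)$. Choosing for each $i$ a nearest point $\bar{x}_i$ to $x_i$ on $[x_0, x_n]$ gives $d(x_i, \bar{x}_i) \leq 2K$, so $d(\bar{x}_{i-1}, \bar{x}_i) \geq \|W\| - 4K$; the fellow-traveling supplied by Lemma \ref{ol1} keeps the points $\bar{x}_i$ in order along $[x_0, x_n]$, so that their consecutive distances sum to the full length of that geodesic. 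Therefore $l_X(V) \geq n(\|W\| - 4K) = \|V\| \left( 1 - \frac{4K}{\|W\|} \right)$, and taking $C > 4K/\theta$ makes the right-hand side at least $(1-\theta)\|V\|$, as required. I expect the Gromov-product bound of the second paragraph to be the main obstacle: it is exactly the point at which the qualitative hypothesis of cyclical minimality must be turned into a quantitative statement that consecutive copies of $W$ do not over-cancel, and the monotonicity of the projections $\bar{x}_i$ — which Lemma \ref{ol1} is designed to guarantee — is what lets the uniform per-joint losses be summed into the clean factor $1 - \theta$ once the period $\|W\|$ is large.
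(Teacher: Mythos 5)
A preliminary remark on the comparison itself: the paper contains no proof of this statement --- it is quoted as Lemma 26 of \cite{O} --- so your attempt can only be measured against Ol'shanskii's argument, and in outline you have reconstructed exactly that: read cyclic minimality as saying $W$ is geodesic and of minimal length in its conjugacy class (a necessary reading, since under the paper's literal definition, with $l_X(W)$ rather than $\|W\|$ on the left, a long word representing $1$ would be ``cyclically minimal'' and the lemma false), use thin triangles plus conjugation by a prefix to force the Gromov products $(W^{i-1},W^{i+1})_{W^i}$ to be of order $\delta$, and then feed the polygon $[1,W,\ldots,W^n]$ into Lemma \ref{ol1}. The conjugation trick in your second paragraph is correct and is the heart of the matter.

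There are, however, genuine gaps in the other two steps. First, the final summation is unjustified: Lemma \ref{ol1} asserts only the two neighborhood containments and says nothing about the nearest-point projections $\bar{x}_i$ lying in order along $[x_0,x_n]$; that monotonicity is not a formal consequence of the containments and is essentially equivalent to the growth estimate you are proving, so attributing it to Lemma \ref{ol1} is circular. A correct finish avoids ordering altogether: apply Lemma \ref{ol1} to each sub-polygon $[x_0,\ldots,x_j]$; its conclusion places $x_{j-1}$ within $2K$ of some $z\in[x_0,x_j]$, whence $(x_0,x_j)_{x_{j-1}}\le 2K$ by the triangle inequality, whence $d(x_0,x_j)\ge d(x_0,x_{j-1})+\|W\|-4K$, and induction on $j$ gives $l_X(W^n)\ge n(\|W\|-4K)$. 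Second, the reduction to $V=W^n$ is asserted with an irrelevant justification: for $V=sW^np$ the danger is not that $s$ and $p$ fail to be geodesic (they do not), but that they cancel against the adjacent full periods at the seams, and geodesicity of subwords says nothing about that. The repair is to note that $V$ is a prefix of a power of the cyclic permutation $W'=sq$ (where $W=qs$), that cyclic permutations of cyclically minimal words are again cyclically minimal (they are conjugate and of equal word length), and that moving an endpoint inward along a side's geodesic only decreases the relevant Gromov product, so the seam products are still at most $t$; leftover segments of length $\le K_1$ are absorbed as an additive error killed by enlarging $C$. Third, and more minor: your factorization $W=pMs$ requires $2t\le\|W\|$, which is not known a priori (it genuinely fails for torsion elements, which the hypotheses of the lemma do not exclude); one must first run the fellow-traveling argument at scale $\min(t,\|W\|/2)$ to conclude that $t>\|W\|/2$ is impossible once $\|W\|>C$.
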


Recall the following classical facts:

\begin{lemma}\label{undistorted}  If $G$ is word hyperbolic, generated by the finite set $X$, and $g\in G$ is of infinite order then the following two conditions hold:

\begin{enumerate}  \item  There exists $\lambda>0$ such that $|n| \leq \lambda l_X(g^n)$ for all $n\in \mathbb{Z}$.

\item  There exists $L \geq 0$ such that each geodesic $\gamma$ in $\Gamma(G, X)$ between two elements in the subgroup $\langle g\rangle$ is within the $L$-neighborhood of $\langle g\rangle$.

\end{enumerate}

\end{lemma}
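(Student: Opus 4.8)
The plan is to establish (1) first and then derive (2) from it. Part (1) asserts that $\langle g\rangle$ is undistorted, equivalently that the stable translation length $\tau(g)=\lim_{n}l_X(g^n)/n$ is positive, and part (2) asserts that $\langle g\rangle$ is quasiconvex. I would reduce at once to a cyclically minimal representative: choose $h=VgV^{-1}$ of minimal length in the conjugacy class of $g$, so that a geodesic word $W$ for $h$ is cyclically minimal in the sense defined above. Since $g^n=Vh^nV^{-1}$, both conclusions for $g$ follow from the corresponding conclusions for $h$ after enlarging the relevant constants by amounts controlled by $l_X(V)$ (the finitely many small values of $n$ being absorbed into the choice of $\lambda$, using that $g^n\neq 1$ for $n\neq 0$ because $g$ has infinite order).

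For part (1) the engine is Lemma \ref{ol2}. If some power $h^k$ admits a cyclically minimal representative $W_k$ with $\|W_k\|>C$ (the constant produced by Lemma \ref{ol2} for $\theta=\tfrac12$), then each word $W_k^{\,m}$ is $W_k$-periodic, so $l_X(h^{km})=l_X(W_k^{\,m})\geq \tfrac12 m\|W_k\|$. This forces $\tau(h)\geq \|W_k\|/(2k)>0$, and unwinding the definition of $\tau$ yields a $\lambda$ with $|n|\leq \lambda\, l_X(h^n)$ for all $n$; transferring back to $g$ completes part (1). Thus everything rests on producing a power of $h$ whose cyclically minimal length exceeds $C$.

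This last point is the crux, and is where hyperbolicity is used essentially (it fails for distorted cyclic subgroups, such as those in solvable Baumslag--Solitar groups). I would argue by contradiction: if $c(h^k)\le C$ for every $k$, where $c$ denotes minimal length in the conjugacy class, then for each $k$ there is a point $z_k$ of $\Gamma(G,X)$ with $d(z_k,h^k z_k)\le C$, so that $h$ has uniformly bounded minimal displacement along all of its powers. Since $D(C)$ is finite, pigeonhole yields an infinite set of exponents whose powers are all conjugate to a single element of length at most $C$. The remaining task is to show this cannot happen for an infinite-order element under the proper cocompact action of $G$ on $\Gamma(G,X)$: an isometry of a proper $\delta$-hyperbolic space with unbounded orbit and bounded minimal displacement would be parabolic, and proper cocompact actions admit no parabolics, while a bounded-orbit (elliptic) isometry coming from $G$ has finite orbit and hence finite order. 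This dichotomy --- equivalently, the positivity of translation length for infinite-order elements --- is the one genuinely geometric input and is the main obstacle; I expect to isolate it as the classical fact that infinite-order elements of hyperbolic groups are loxodromic.

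For part (2), once (1) is known the map $n\mapsto g^n$ is a quasi-isometric embedding of $\mathbb{Z}$, since $\tfrac1\lambda|n-m|\le l_X(g^{n-m})\le l_X(g)\,|n-m|$; thus $\langle g\rangle$ is a quasigeodesic. I would then invoke Lemma \ref{ol1} as a stability (Morse) principle: applied to the polygonal path through the points $x_i=a^i$ for a suitably high power $a=h^{n_0}$ with long cyclically minimal representative, its long sides (of length $l_X(a)>K_1$) together with the corner Gromov products $(x_{i-2},x_i)_{x_{i-1}}=l_X(a)-\tfrac12 l_X(a^2)$ --- bounded by a constant by cyclic minimality, so that the small-angle hypothesis $<K$ holds --- place the connecting geodesic $[x_1,x_n]$ within a $2K$-neighborhood of the path, hence within a bounded neighborhood of $\langle h\rangle$. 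Since conjugate subgroups are quasiconvex together (the conjugating isometry changes the neighborhood constant by a bounded amount), this produces the uniform $L$ for $\langle g\rangle$ and finishes part (2).
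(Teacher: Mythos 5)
First, a point of context: the paper does not prove Lemma \ref{undistorted} at all --- it is introduced with ``Recall the following classical facts'' and used as a black box, with no argument or even a specific citation. So your attempt cannot be checked against a proof in the paper; the question is whether it stands on its own. Much of it is sound. The reduction to a conjugacy-minimal representative $h$ is the right move; the use of Lemma \ref{ol2} to convert ``some power of $h$ has conjugacy-minimal length $>C$'' into positivity of the stable translation length $\tau(h)=\lim_n l_X(h^n)/n$ works (by Fekete's subadditivity lemma $\tau(h)=\inf_n l_X(h^n)/n$, which yields the linear lower bound in (1)); and your derivation of (2) from (1), using Lemma \ref{ol1} as a stand-in for the Morse lemma, is essentially correct: the computation $(x_{i-2},x_i)_{x_{i-1}}=l_X(a)-\frac{1}{2}l_X(a^2)\le \theta\, l_X(a)$ is right, and the transfers between $\langle h^{n_0}\rangle$, $\langle h\rangle$ and $\langle g\rangle$ are routine. (One small slip: $W_k$ represents a conjugate of $h^k$, not $h^k$ itself, so you get $l_X(h^{km})\ge \frac{1}{2}m\|W_k\|-2l_X(z_k)$ rather than an equality; this is harmless because $\tau$ is a conjugacy invariant.)

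The genuine gap is exactly where you flag it, and it cannot be outsourced in the way you propose: the claim that an infinite-order $h$ cannot satisfy $c(h^k)\le C$ for every $k$, where $c$ denotes conjugacy-minimal length. The fact you defer to --- ``infinite-order elements of hyperbolic groups are loxodromic,'' equivalently ``geometric actions admit no parabolics, and proper elliptic elements have finite order'' --- is not an auxiliary input: positivity of stable translation length is \emph{precisely} statement (1), again by Fekete. So as written, your proof of (1) is circular; the pigeonhole step and the Lemma \ref{ol2} reduction do not lower the difficulty, they relocate all of it into a cited fact equivalent to the conclusion. To close the gap you must either genuinely prove that input (the self-contained route: a conjugacy-minimal geodesic word $W$ with $\|W\|>8\delta$ powers up to $\|W\|$-local geodesics, which are uniform quasigeodesics in a $\delta$-hyperbolic space --- but one must then still rule out, by a real argument, the residual case in which \emph{every} power has conjugacy length at most $\max(C,8\delta)$; that is where the no-parabolics theorem does actual work in the literature, e.g.\ via semisimplicity of isometries under proper cocompact actions), or cite it properly --- in which case the citation already subsumes all of part (1), your scaffolding around it becomes redundant, and you are in the same position as the paper, which simply cites the entire lemma. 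The one piece of your write-up that would remain genuine added value in that scenario is the derivation of (2) from (1) via Lemma \ref{ol1}.
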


The unifom monotonicity of word hyperbolic groups follows immediately from the following theorem, which together with Theorem \ref{u.m.} proves Theorem \ref{main}.

\begin{theorem}  If $G$ is a torsion-free word hyperbolic group and $X$ a finite generating set there exists a constant $N\in \omega$ such that if $g\in G-\{1\}$ then $l_X(g^N)>l_X(g)$.
\end{theorem}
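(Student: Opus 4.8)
The plan is to produce a single $N$ with $l_X(g^N) > l_X(g)$ for every $g \neq 1$; since $l_X$ takes values in $\omega$ this gives $l_X(g^N) \geq l_X(g) + 1$, and because $G$ is torsion-free the powers of $g$ are all nontrivial, so iterating yields $l_X(g^{N^k}) \geq l_X(g) + k$ and hence the universal monotonicity needed to apply Theorem \ref{u.m.}. To find such an $N$ I would first pass to a cyclically reduced conjugate: write $g = v u v^{-1}$ where $u$ realizes the minimal length in the conjugacy class of $g$ and $v$ is a shortest conjugator. If $W$ is a geodesic word for $u$ then $W$ is automatically cyclically minimal, for if $W =_G V U V^{-1}$ then $[U]$ is conjugate to $u$, so $\|U\| \geq l_X([U]) \geq l_X(u) = \|W\|$. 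The point of this reduction is that $u$ is shared between $g = vuv^{-1}$ and $g^N = v u^N v^{-1}$, so I will aim to show $l_X(g^N) \geq l_X(u^N) + 2 l_X(v) - c$ together with $l_X(g) \leq l_X(u) + 2 l_X(v)$ for a uniform constant $c$; subtracting makes the conjugator disappear and leaves $l_X(g^N) - l_X(g) \geq l_X(u^N) - l_X(u) - c$, which I then force to be positive.

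The second ingredient is that $l_X(u^N)$ grows linearly in $N$ at a rate bounded below away from $0$, uniformly in $u$. I split on the length $m = l_X(u) = \|W\|$. Fix $\theta = \frac12$ and let $C$ be the constant from Lemma \ref{ol2}. If $m > C$, then $W^N$ is a $W$-periodic word for a cyclically minimal $W$ with $\|W\| > C$, so Lemma \ref{ol2} gives $l_X(u^N) \geq \frac12 N m$. If $m \leq C$ there are only finitely many possible $u$, each of infinite order because $G$ is torsion-free, so Lemma \ref{undistorted}(1) gives $l_X(u^N) \geq N / \lambda_u$; taking the worst of the finitely many $\lambda_u$ still yields linear growth. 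In either regime $l_X(u^N) - m \to \infty$ at a rate controlled by constants depending only on $G$ and $X$.

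The main work, and the step I expect to be the real obstacle, is the lower bound $l_X(g^N) \geq l_X(u^N) + 2 l_X(v) - c$, i.e. showing that the geodesic from $1$ to $g^N = v u^N v^{-1}$ genuinely traverses the conjugator, the translate $u^N$, and the conjugator back, with only bounded cancellation at the two joints. I would apply Lemma \ref{ol1} to the geodesic quadrilateral with vertices $1, v, v u^N, g^N$. Its sides have lengths $l_X(v)$, $l_X(u^N)$, $l_X(v)$, and the two interior Gromov products are $(1, vu^N)_v$ and $(v, g^N)_{vu^N}$, each measuring exactly the cancellation between $v^{\pm 1}$ and $u^{\pm N}$. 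The side-length hypothesis of Lemma \ref{ol1} holds once $N$ is large (so $l_X(u^N) > K_1$) and $l_X(v) > K_1$; the remaining case $l_X(v) \leq K_1$ is easy, since there $l_X(g^N) \geq l_X(u^N) - 2 l_X(v) \geq l_X(u^N) - 2K_1$ directly. The crux is the small-angle hypothesis: I must bound both Gromov products above by a uniform $K$. This is where minimality of $v$ and quasiconvexity of $\langle u \rangle$ (Lemma \ref{undistorted}(2)) enter: if $(1, vu^N)_v$ were large, then geodesics $[v,1]$ and $[v, vu^N]$ would share a long prefix, which by quasiconvexity lies near a genuine power $u^j$, so $vu^j$ would be a strictly shorter conjugator of $u$ to $g$, contradicting the choice of $v$. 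Once both products are bounded by $K$ (enlarging $K$ if necessary to absorb the finitely many per-element constants coming from the short $u$, which is legitimate as Lemma \ref{ol1} permits any $K \geq 14\delta$), Lemma \ref{ol1} places $v$ and $vu^N$ within $2K$ of $[1, g^N]$; a short Gromov-product computation using this together with $d(v, g^N) \geq l_X(u^N) + l_X(v) - 2K$ yields the desired $l_X(g^N) \geq 2 l_X(v) + l_X(u^N) - c$.

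Assembling the pieces, the difference satisfies $l_X(g^N) - l_X(g) \geq l_X(u^N) - m - c$, which by the linear-growth estimate exceeds $0$ once $N$ is larger than a bound determined by $C$, $c$, $K_1$, and the finitely many undistortion constants $\lambda_u$ for $u \in D(C)$. Choosing $N$ to be the maximum of these finitely many thresholds gives a single $N$ working for all $g \neq 1$, completing the proof.
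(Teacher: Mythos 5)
Your skeleton matches the paper's: pass to a minimal-length conjugacy representative $u$ and a shortest conjugator $v$, get linear growth of $l_X(u^N)$ from Lemma \ref{ol2} when $l_X(u)$ is long and from Lemma \ref{undistorted}(1) for the finitely many short $u$, and control the geodesic quadrilateral $[1,v,vu^N,g^N]$ with Lemma \ref{ol1} so that the conjugator contributes $2l_X(v)$ to $l_X(g^N)$ up to an error. The gap is at the step you yourself flag as the crux: there is no \emph{uniform} bound $K$ on the two corner Gromov products, and your proposed justification of one is invalid. Lemma \ref{undistorted}(2) supplies a quasiconvexity constant $L_u$ depending on $u$; it is uniform only over the finitely many $u$ with $l_X(u)\leq C$ (which is the only way the paper ever uses it), yet you invoke it for arbitrary cyclically minimal $u$. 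Worse, the uniform bound you want is simply false, already in $F(a,b)$ with its standard basis: take $u=a^{-k}b^{k}$ (cyclically reduced, of minimal length in its conjugacy class, not a proper power) and $g=vuv^{-1}=b^{k}a^{-k}$ with $v=a^{k}$. The conjugators of $u$ to $g$ are exactly the elements $a^{k}u^{j}$, of which $v=a^{k}$ is a shortest one, yet $(1,vu^N)_v=\tfrac{1}{2}\bigl(k+2Nk-(2Nk-k)\bigr)=k=l_X(u)/2$. Conjugating further by a long generic word $y$ (so that the shortest conjugator becomes $ya^{k}$) shows the same value $l_X(u)/2$ occurs when $l_X(v)$ is much larger than $l_X(u)$, i.e.\ precisely in the regime where the quadrilateral argument is needed. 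So the corner products genuinely grow like $l_X(u)/2$, and no enlargement of a fixed constant $K$ can repair this.

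The paper's Case 1b shows how to live with that: for $l(h)>C$ it never touches Lemma \ref{undistorted}(2); instead it applies Lemma \ref{ol1} to the $(N+1)$-gon $[x,xh,\dots,xh^N]$ (whose corner products are at most $l(h)/100$ by Lemma \ref{ol2}) to place $[x,xh^N]$ within $14\delta$ of that polygonal path, and combines this with minimality of $x$ to bound the quadrilateral's corner products by $l(h)/2+16\delta$ --- a bound scaling with $l(h)$, consistent with the counterexample above. It then runs your quadrilateral argument with $K=l(h)/2+17\delta$ and $K_1=7l(h)$, splitting off $l(x)\leq 7l(h)$ as the separate easy case, and arrives at $l(g^N)\geq 2l(x)+l(h^N)-4l(h)-136\delta$. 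The error term now grows like $4l(h)$, so your final assembly --- choosing $N$ from ``$C$, $c$, $K_1$, and the finitely many $\lambda_u$'' as though these were absolute constants --- does not make sense as written; the inequality still closes, but only because the growth $l(h^N)\geq(1-\theta)Nl(h)$ from Lemma \ref{ol2} scales with $l(h)$ at the same linear rate as the error, so that $N\geq 100$ already suffices in that regime. That scaling verification (equivalently, some substitute for the paper's Case 1b) is the missing content, and without it your proof does not close.
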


\begin{proof}  Fix $\delta$ which satisfies both the original and the geodesic definitions of hyperbolic space.  Let $l_X(\cdot) = l(\cdot)$ for simplicity of notation.  In Lemma \ref{ol2} we let $\theta = \frac{1}{1000}$ and pick $C$ accordingly where without loss of generality $C> 1000\delta$.  For each $h\in D(C) -\{1\}$ pick $\lambda_h, L_h \in \omega-\{0\}$ as in the statement of Lemma \ref{undistorted}.  Let $\lambda = \max\{\lambda_h\}_{h\in D(C)-\{1\}}$ and $L \geq \max\{L_h\}_{h\in D(C) - \{1\}}, C$.  Let $N \geq 100, 241 L \lambda$.  Let $g\in G-\{1\}$.  We treat cases.  In Cases 1a and 1b we use the fact that $N \geq 100$ and in Cases 2a and 2b we use the fact that $N \geq 241 L \lambda$.  Pick $h$ which is conjugate to $g$ and of minimal length.  Pick $x$ of minimal length such that $g = xhx^{-1}$.

\textbf{Case 1a}.  Suppose $l(h)>C$ and $l(x) \leq 7l(h)$.  Then we have that

\begin{center}  $l(g^N) \geq l(h^N) - 2l(x) \geq l(h^N) - 14 l(h)$

$\geq 100(\frac{1}{2}l(h)) -14l(h)$  (here we are using $N \geq 100$ and $\theta \leq \frac{1}{2}$)

$\geq 36l(h) > l(h) + 14 l(h) \geq l(h) + 2l(x) \geq l(g)$
\end{center}

\textbf{Case 1b}.  Suppose $l(h)>C$ and $l(x) > 7l(h)$.  Consider the geodesic n-gon $[x, xh, xh^2, \ldots, xh^N]$, which is an isometric translate of the geodesic n-gon $[1, h, h^2, \ldots, h^N]$.  Notice that

\begin{center}
$(h^i, h^{i+2})_{h^{i+1}}= (1, h^2)_{h}$

$=\frac{1}{2}(l(h) + d(h, h^2) -l(h^2))$

$\leq \frac{1}{2}(2l(h) - (1-\frac{1}{100})2l(h))$

$=\frac{1}{100}l(h) < \frac{1}{50}l(h)$
\end{center}

for $0 \leq i \leq N-2$.  Also, $d(h^i, h^{i+1}) = l(h)$.  Thus letting $K = \frac{1}{50}l(h)$ and $K_1 = \frac{1}{2}l(h)$ in Lemma \ref{ol1}, we have that $[x, xh^N]$ is in the $14\delta$-neighborhood of $[x, xh]\cup [xh, xh^2] \cup \cdots \cup [xh^{N-1}, xh^N]$.

We use Lemma \ref{ol1} again.  Notice that $l(x) \leq l(xh^i)$ for all $i\in \mathbb{Z}$ by the minimality of the length of $x$ (else $(xh^i)h(xh^i)^{-1} = g$ and we have a contradiction).  Letting $v\in [x, xh^N]$ be such that $d(x,v) = (1, xh^N)_x$ we may pick $v'\in [x, xh]\cup [xh, xh^2] \cup \cdots \cup [xh^{N-1}, xh^N]$ such that $d(v, v') \leq 14 \delta$.  For some $0 \leq i \leq N$ we have that $d(v', xh^i) \leq \frac{l(h)}{2}$.  Thus $d(v, xh^i) \leq \frac{l(h)}{2} + 14 \delta$.  Then

\begin{center}  $d(1, v) \geq d(1, xh^i) - d(v, xh^i)$

$\geq l(x) - \frac{l(h)}{2} - 14 \delta$

\end{center}

Letting $w\in [1, x]$ be such that $d(x, w) = (1, xh^N)_x$ we have that $d(w, v) \leq 2\delta$, and so 

\begin{center}  $d(1, w) \geq d(1, v) - d(v,w)$

$\geq l(x) - \frac{l(h)}{2} - 14 \delta -2 \delta$

$= l(x) - \frac{l(h)}{2} - 16 \delta$
\end{center}

Thus $(1, xh^N)_x \leq \frac{l(h)}{2} + 16\delta$.  The similar argument shows that $(x, g^N)_{xh^N}\leq \frac{l(h)}{2} + 16\delta$.  Now letting $K = \frac{l(h)}{2} + 17\delta$ and $K_1 = 7l(h)$, we see that $K_1 = 7l(h) \geq 12(\frac{l(h)}{2} + 18\delta)$ since $l(h)>C> 1000\delta$.  Considering the geodesic quadrangle $[1, x, xh^N, g^N = xh^Nx^{-1}]$ we see by Lemma \ref{ol1} that $[1, x] \cup [x, xh^N] \cup [xh^N, g]$ is in the $2K$-neighborhood of $[1, g^N]$.  Pick $s_0, s_1\in [1, g^N]$ such that $d(x, s_0) \leq 2K = l(h) + 34 \delta$ and $d(xh^N, s_1) \leq  2K = l(h) + 34\delta$.  It is easy to see that $s_0\in [1, s_1] \subseteq [1, g^N]$.

Now 
\begin{center}
$l(g^N)= d(1, s_0) + d(s_0, s_1) + d(s_1, g^N)$

$\geq (l(x) - l(h) - 34 \delta) + (l(h^N) -2l(h)- 68 \delta)   +   (l(x) - l(h) - 34 \delta)$

$= 2l(x) + l(h^N) -4l(h) - 136\delta$

$\geq 2l(x)+ 100(\frac{999}{1000}l(h))-4 l(h)-136 \delta$

$> 2l(x)+ 99l(h))-4 l(h)-136 \delta$

$> 2l(x) + l(h)$

$\geq l(g)$
\end{center}

so that we are done in this case.

\textbf{Case 2a}.  Suppose that $l(h) \leq C$ and $l(x) \leq 60 L$.  Then we have

\begin{center}  $l(g^N) \geq l(h^N) - 2l(x)$

$\geq l(h^N) - 120L$

$\geq \frac{N}{\lambda} -120L$

$> 241L -120 L = L + 120L \geq l(h)+ 2l(x) \geq l(g)$

\end{center}

\textbf{Case 2b}.  Suppose that $l(h)\leq C$ and  $l(x) > 60L$.  Let $v\in [x, xh^N]$ be such that $d(x, v) = (1, xh^N)_x$.  As $[x, xh^N]$ is an isometric translation of $[1, h^N]$ we have that there is some $i\in \mathbb{Z}$ such that $d(v, xh^i)\leq L$.  We know $l(xh^i) \geq l(x)$ by the minimality condition on $l(x)$.  Pick $w\in [1, x]$ such that $d(x, w) = (1, xh^N)_x$, so that $d(v,w) \leq 2\delta$.  Then

\begin{center}  $(1, xh^N)_x = l(x) - d(1, w)$

$\leq l(x) - (d(1, v) - 2\delta)$

$\leq l(x)+ 2\delta - (l(xh^i)-L)$

$\leq l(x) + 2\delta +L - l(x) = 2\delta + L$
\end{center}

The condition $(x, g^N)_{xh^N} \leq 2\delta +L$ is proven similarly.  Now we employ Lemma \ref{ol1} again using $K = 2\delta + 2L$ and $K_1 = 24L + 36 \delta$, so that $[1, x]\cup[x, xh^N]\cup[xh^N, g^N]$ is within the $2K = 4\delta + 4L$-neighborhood of $[1, g^N]$.  Select $s_0, s_1 \in [1, g^N]$ so that $d(x, s_0), d(xh^N, s_1)\leq 4\delta + 4L$.  Now

\begin{center}  $l(g^N) = d(1, s_0) + d(s_0, s_1) + d(s_1, g^N)$

$\geq (l(x) -4\delta - 4L) + (l(h^N) - 8\delta -8L) + (l(x) - 4\delta -4L)$

$\geq 2l(x) + l(h^N) - 32L > 2l(x) + l(h) \geq l(g)$
\end{center}

so we are done in this case as well.

\end{proof}

\end{section}

\begin{section}{Graph Products of n-slender Groups}

We recall the definition of a graph product of groups and some machinery, then prove Theorem \ref{Graphprod}.  Suppose $\Gamma = (V, E)$ is a graph (we allow the sets of vertices and edges to be of arbitrary cardinality but do not allow an edge to connect a vertex to itself) and to each vertex $v\in V$ we associate a group $G_v$.  We call the $G_v$ the generating groups.  The graph product $G = \Gamma(\{G_v\}_{v\in V})$ is defined by taking the free product $\ast_{v\in V} G_v$ and modding out by the normal closure of the set $\{[g_{v_0}, g_{v_1}]\}_{g_{v_0} \in G_{v_0}, g_{v_1} \in G_{v_1}, \{v_0, v_1\}\in E}$.  Thus free products of groups and direct sums of groups are examples of graph products of groups, with the graphs having either no edges or being complete in the respective cases.

Each $G_v$ is a retract subgroup of $G$ and $G$ is generated by the elements of the generating subgroups $G_v$.  Thus each element $g\in G$ has a representation as a word $g =_{G} g_0 g_1g_2\cdots g_{n-1}$ with each $g_i$ in a generating group.  In such a word we call each $g_i$ a syllable.  Given two generating groups $G_{v_0}$ and $G_{v_1}$ it is easy to see that the subgroup $\langle G_{v_0}\cup G_{v_1}\rangle \leq G$ is a retract of $G$ and is either isomorphic to $G_{v_0} \ast G_{v_1}$ or $G_{v_0}\times G_{v_1}$, the first being the case if and only if $\{v_0, v_1\} \notin E$.  Thus for nontrivial elements $g_0\in G_{v_0}$ and $g_1\in G_{v_1}$ we have that $[g_0, g_1] = 1$ if and only if $\{v_0, v_1\} \in E$.

We present some machinery found in \cite{Gre}, where graph products were first introduced.  We say a word $g_0 g_1\cdots g_{n-1}$ in elements of the generating groups is reduced if the following hold:

\begin{enumerate}

\item Each $g_i$ is a nontrivial element in a generating group and $g_i$ and $g_{i+1}$ are in different generating groups for all $0\leq i<n-1$

\item If $i \leq k < j$ and $[g_i, g_{i+1}] = [g_i, g_{i+2}]= \cdots = [g_i, g_k] = 1 = [g_{k+1}, g_j] = [g_{k+2}, g_j] = \cdots = [g_{j-1}, g_j]$ then $g_i$ and $g_j$ are in different generating groups.

\end{enumerate}
  
We say that two reduced words $w_0, w_1$ are equivalent,  if one can obtain $w_1$ from $w_0$ by a permutation of syllables as allowed in the group (i.e. one can permute the syllables $g_i$ and $g_{i+1}$ if and only if $[g_i, g_{i+1}] =1$).  Clearly the equivalence of $w_0$ to $w_1$ implies that $w_0$ and $w_1$ have the same word length and $w_0 =_G w_1$.  Using $\bigcup_{v\in V} G_v$ as a generating set for $G$ we get a length function $l$ on $G$.

The following result combines the statements of Theorem 3.9 and Corollary 3.13 of \cite{Gre}:

\begin{lemma}\label{Green}  Each $g \neq 1$ has a reduced word representation $g =_G g_0g_1\cdots g_{n-1}$ which is unique up to equivalence, with $l(g) = n$.
\end{lemma}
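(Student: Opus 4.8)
The plan is to establish the normal form by the classical van der Waerden method: first prove existence of a reduced representative by a terminating reduction procedure, then prove uniqueness up to equivalence by building an action of $G$ on the set of equivalence classes of reduced words, and finally deduce $l(g) = n$ as a formal consequence. (This is in substance the content of Green's cited results, but it is worth recording how the argument goes.)

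For existence, I would start from any expression $g =_{G} a_0 a_1 \cdots a_{k-1}$ with each $a_i$ a nontrivial element of some generating group and introduce two moves: a \emph{shuffle}, which interchanges adjacent syllables $a_i, a_{i+1}$ whenever $[a_i, a_{i+1}] = 1$, and a \emph{merge}, which replaces two syllables lying in a common generating group and made adjacent by shuffles with their product, deleting the pair entirely when that product is trivial. A word is reduced exactly when no merge (after any sequence of shuffles) is available, which is precisely the negation of the failure of conditions (1) and (2). Since each merge strictly decreases the syllable count while shuffles preserve it, iterating the moves yields a reduced word after finitely many steps, and this representative still represents $g$.

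The substance is uniqueness. Let $\mathcal{R}$ denote the set of shuffle-equivalence classes of reduced words, together with the class of the empty word. For each vertex $v$ and each $a \in G_v$ I would define a map $\tau_a \colon \mathcal{R} \to \mathcal{R}$ as follows. Given a reduced word $w$, one first checks that there is \emph{at most one} syllable $g_i \in G_v$ that can be shuffled to the front of $w$: if $g_i, g_j \in G_v$ with $i < j$ could both be brought to the front, then applying condition (2) with the index choice $k = i$ (the left chain being vacuous, the right chain $[g_{i+1}, g_j] = \cdots = [g_{j-1}, g_j] = 1$ holding because $g_j$ commutes with all of $g_0, \dots, g_{j-1}$) forces $g_i, g_j$ into different groups, a contradiction. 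If such a $g_i$ exists, set $\tau_a(w)$ to be the class obtained by replacing $g_i$ with $a g_i$ when $a g_i \neq 1$ and by deleting $g_i$ when $a g_i = 1$; otherwise set $\tau_a(w) = [a w]$. The main obstacle, and the step demanding the most care, is verifying that $\tau_a$ is well defined on shuffle-classes (here the viewpoint of reduced words as trace-monoid elements, with front-accessible syllables as the minimal elements of the associated poset, makes the bookkeeping cleanest), that $\tau_a \tau_b = \tau_{ab}$ for $a, b \in G_v$, and that $\tau_a \tau_b = \tau_b \tau_a$ whenever $a \in G_{v_0}$, $b \in G_{v_1}$ with $\{v_0, v_1\} \in E$. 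These identities say exactly that $a \mapsto \tau_a$ respects the defining relations of the graph product, so the assignment extends to an action of $G$ on $\mathcal{R}$.

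Granting the action, uniqueness is immediate: evaluating $g$ on the class of the empty word recovers a reduced representative, and if two reduced words both represent $g$ they send the empty class to the same class, forcing them to be shuffle-equivalent. For the length statement, $l(g) \le n$ holds because a reduced word of $n$ syllables is a word of length $n$ in the generating set $\bigcup_{v \in V} G_v$. Conversely, a word of minimal length $l(g)$ representing $g$ can admit no merge, since a merge would shorten it; hence it is already reduced, and by the uniqueness just established it has exactly $n$ syllables, giving $l(g) = n$.
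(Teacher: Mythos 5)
The paper offers no proof of Lemma \ref{Green} for you to be compared against: it is imported wholesale, as the surrounding text says, from Theorem 3.9 and Corollary 3.13 of Green's thesis \cite{Gre}. So your proposal has to be judged as a reconstruction of Green's normal form theorem, and as such it follows a standard and correct route: existence via a terminating shuffle/merge reduction, uniqueness via the van der Waerden trick of making $G$ act on shuffle classes of reduced words, and the equality $l(g)=n$ as a formal consequence. The linchpin of the uniqueness argument --- that a reduced word contains at most one syllable from a given $G_v$ that can be shuffled to the front --- is exactly the right observation, and your derivation of it from condition (2) with $k=i$ is correct.

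Two caveats, both repairable but worth naming because you present them as definitional when they in fact require proof. First, you identify the paper's static conditions (1)--(2) with the dynamic condition ``no merge is available after any sequence of shuffles.'' These are equivalent, but not tautologically: condition (2) only outlaws bringing $g_i$ and $g_j$ together by the special pattern in which $g_i$ moves right, $g_j$ moves left, and every other syllable stays put, whereas a general shuffle sequence may also rearrange the intervening syllables. A real argument is needed: among same-group pairs that can be made adjacent by some shuffle sequence, take one with $j-i$ minimal; if the split pattern of condition (2) failed for it, there would be an intervening syllable of the same group $G_v$ that commutes with $g_j$ but not $g_i$, which in the course of the shuffle sequence must pass $g_j$ and hence becomes adjacent to it, producing an adjacentable same-group pair with smaller gap --- a contradiction; so the split pattern holds for the minimal pair and condition (2) is violated. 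Your construction of $\mathcal{R}$ silently uses this equivalence, since it is precisely what makes reducedness invariant under shuffles. Second, besides the identities $\tau_a\tau_b=\tau_{ab}$ and the commutation relations, which you flag and defer, you must also check that $\tau_a$ lands in $\mathcal{R}$ at all: prepending $a$, replacing $g_i$ by $ag_i$, and deleting $g_i$ must each be shown to preserve reducedness. (Deletion, for instance, is safe only because $g_i$ can first be shuffled to the front and a contiguous subword of a reduced word is reduced; deleting an arbitrary interior syllable of a reduced word can destroy reducedness, as the word $x_1zx_2$ with $x_1,x_2\in G_u$ and $z$ noncommuting with $G_u$ shows.) With these verifications supplied, your outline becomes a complete and correct proof of the lemma the paper takes on faith from \cite{Gre}.
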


We give a lemma before proving Theorem \ref{Graphprod}.

\begin{lemma}\label{extension}  An n-slender by n-slender group is n-slender.
\end{lemma}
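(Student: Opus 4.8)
The plan is to unwind the hypothesis as a short exact sequence $1 \to K \to G \xrightarrow{q} Q \to 1$ in which the normal subgroup $K$ and the quotient $Q$ are both n-slender, and then to exploit the self-similarity of $\HEG$: each $\HEG^N$ is itself canonically a copy of $\HEG$. Given an arbitrary homomorphism $\phi : \HEG \to G$, my goal is to produce an $N$ for which $\phi$ kills $\HEG^N$, and I would obtain it in two stages — first controlling $\phi$ modulo $K$ via the n-slenderness of $Q$, then controlling what remains inside $K$ via the n-slenderness of $K$.

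First I would compose with the quotient map to get $q\circ\phi : \HEG \to Q$. Since $Q$ is n-slender there is an $N_1 \in \omega$ such that $q\circ\phi$ is trivial on $\HEG^{N_1}$. This says exactly that $\phi(\HEG^{N_1}) \subseteq \ker q = K$, so the restriction $\psi := \phi|_{\HEG^{N_1}}$ is a genuine homomorphism $\psi : \HEG^{N_1} \to K$ into the n-slender group $K$.

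Next I would invoke self-similarity. The subgroup $\HEG^{N_1}$ consists of classes of words in the letters $\{a_n^{\pm 1}\}_{n=N_1+1}^{\infty}$, so the relabeling $a_k \mapsto a_{N_1+1+k}$ gives a canonical isomorphism $\iota : \HEG \to \HEG^{N_1}$. This $\iota$ identifies the descending chain $\HEG \supseteq \HEG^0 \supseteq \HEG^1 \supseteq \cdots$ with the tail $\HEG^{N_1}\supseteq \HEG^{N_1+1}\supseteq \HEG^{N_1+2}\supseteq\cdots$; concretely $\iota(\HEG^M) = \HEG^{N_1+M+1}$. Applying the n-slenderness of $K$ to the composite $\psi\circ\iota : \HEG \to K$ yields an $M$ with $\psi\circ\iota$ trivial on $\HEG^M$, which under $\iota$ means that $\phi$ is trivial on $\HEG^{N_1+M+1}$. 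Setting $N = N_1+M+1$ then gives $\phi = \phi\circ p_N$, which is what n-slenderness of $G$ demands.

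The only point requiring care — and the \emph{hard part}, such as it is — is verifying that the relabeling isomorphism $\iota$ genuinely intertwines the two filtrations, so that triviality of $\psi\circ\iota$ on the abstract $\HEG^M$ translates back to triviality of $\phi$ on the concrete $\HEG^{N_1+M+1}$. This is immediate from the canonical splitting $\HEG \cong \HEG_N \ast \HEG^N$ recorded earlier, under which $\HEG^{N_1}$ is visibly a copy of $\HEG$ carrying its own compatible system of projections, so no essential difficulty arises.
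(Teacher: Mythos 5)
Your proof is correct, and it follows the same two-stage skeleton as the paper's: first apply n-slenderness of $Q$ to $q\circ\phi$ to conclude that $\phi$ carries $\HEG^{N_1}$ into $K$, then apply n-slenderness of $K$ to finish. Where you diverge is in how you turn $\phi|_{\HEG^{N_1}}$, a homomorphism defined only on a subgroup, into a homomorphism with domain all of $\HEG$ so that n-slenderness of $K$ can be invoked. The paper extends: using the splitting $\HEG \simeq \HEG_{N'} \ast \HEG^{N'}$ it defines $\phi':\HEG\rightarrow K$ to be trivial on the free factor $\HEG_{N'}$ and equal to $\phi$ on $\HEG^{N'}$, applies n-slenderness of $K$ to $\phi'$, and notes that $\phi$ and $\phi'$ agree on $\HEG^{N'}\supseteq \HEG^N$. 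You instead reparametrize: you precompose with the self-similarity isomorphism $\iota:\HEG\rightarrow\HEG^{N_1}$ given by relabeling letters, which identifies the tail of the filtration with the whole filtration. Both devices are legitimate and of comparable length; the paper's has the small advantage of using only the splitting it has already recorded, while yours requires one additional (easy) fact about $\HEG$. That leads to the one blemish worth flagging: your closing claim that the needed properties of $\iota$ are \emph{immediate from} the splitting $\HEG\cong\HEG_N\ast\HEG^N$ is a misattribution. The free-product decomposition does not by itself say that $\HEG^{N_1}$ is isomorphic to $\HEG$ in a filtration-compatible way (for an arbitrary free factor no such statement holds); what you actually need is that the relabeling $a_k\mapsto a_{N_1+1+k}$ of letters induces a bijection on words respecting the relation $\sim$, i.e.\ that $p_M(U)=p_M(V)$ for all $M$ if and only if the corresponding equalities hold for the relabeled words. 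This follows directly from the definition of $\sim$ via finite projections, since relabeling generators induces isomorphisms of the relevant finitely generated free groups, but it is a separate verification, not a consequence of the splitting. With that justification supplied in place of the appeal to the splitting, your argument is complete.
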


\begin{proof}  Suppose that $1\rightarrow K \rightarrow^{\iota} G \rightarrow^q Q \rightarrow 1$ is a short exact sequence of groups with $K$ and $Q$ n-slender groups, where for simplicity we identify $K$ with its image in $G$.  Let $\phi: \HEG \rightarrow G$ be a homomorphism.  By the n-slenderness of $Q$ we see that for $q\circ \phi$ there exists an $N'\in \omega$ such that $\phi|\HEG^{N'}$ maps into the kernel of $q$.  In other words, $\phi$ maps $\HEG^{N'}$ into $K$.  As $\HEG \simeq \HEG_{N'}\ast \HEG^{N'}$ we may define a homomorphism $\phi':\HEG \rightarrow G$ by letting $\phi'|\HEG_{N'}$ be the trivial map and $\phi'|\HEG^{N'} = \phi|\HEG^{N'}$.  By n-slenderness of $K$ there exists an $N \in \omega$, without loss of generality $N >N'$ such that $\phi'|\HEG^N$ is the trivial map.  Then $\phi|\HEG^N$ is the trivial map.
\end{proof}

\begin{proof}{(of Theorem \ref{Graphprod})}  Let $G_v$ be n-slender for each $v\in V$.  Let $\sigma: G \rightarrow \bigoplus_{v\in V} G_v$ be the obvious surjective map.  Eda proved in \cite{E} that $\bigoplus_{v\in V} G_v$ is n-slender.  Thus by the previous lemma we will be done if we can show that $\ker(\sigma)$ is n-slender.  We prove in fact that $\ker(\sigma)$ is u.m.

The length function $l$ on $G$ described above restricts to a length function on $\ker(\sigma)$.  We show that for $g\in \ker(\sigma)-\{1\}$ we have $l(g^2)> l(g)$, which is sufficient to show that $l$ is a uniformly monotone length function.  Let $g\in \ker(\sigma)-\{1\}$ be given and $g =_G g_0g_1\cdots g_{n-1}$ be a reduced word representation as described in Lemma \ref{Green}.  We permute the syllables of the word $g_0g_1\cdots g_{n-1}$ to get a possibly nicer reduced word representation.  To start, suppose that for some $0\leq i<j\leq n-1$ we have that $[g_i, g_{i-1}] = [g_i, g_{i-1}] = \cdots = [g_i, g_0] = 1 = [g_j, g_{j+1}] = \cdots = [g_j, g_{n-1}]$ and $g_i = g_j^{-1}$.  Then permute the syllables and relabel them so that $i = 0$ and $j = n-1$ and $g_0=g_{n-1}^{-1}$.  Perform the same process on the word $g_1g_2\cdots g_{n-2}$, moving a pair of mutual inverses to the front and rear of the word if possible.  Continue this process until it is impossible to go further, so that by relabeling we get a (possibly empty, in case the process can never be performed) initial segment $g_0g_1\cdots g_{k-1}$ and a (possibly empty) terminal segment $g_{n-k} \cdots g_{n-1}$ such that for $0 \leq i<k$ we have $g_i = g_{n-i-1}^{-1}$ and the process cannot be performed on the word $g_{k} g_{k+1}\cdots g_{n-k-1}$.

We next manipulate the word $g_kg_{k+1}\cdots g_{n-k-1}$.  If there exists $k \leq i< j \leq n-k$ such that $[g_i, g_{i-1}] = [g_i, g_{i-2}] = \cdots = [g_{i}, g_k] = 1 = [g_j, g_{j+1}] = \cdots = [g_j, g_{n-k-1}]$ and both $g_i$ and $g_j$ are in the same generating group, then move the syllable $g_i$ to the front and the syllable $g_j$ to the rear of the word $g_kg_{k+1}\cdots g_{n-k-1}$ so that by relabeling we may assume $i=k$ and $j= n-k-1$.  For the word $g_{k+1}g_{k+2}\cdots g_{n-k-2}$ consider whether there exist $k+1\leq i<j\leq n-k-2$ such that $[g_i, g_{i-1}] = [g_i, g_{i-2}] = \cdots = [g_{i}, g_k] = 1 = [g_j, g_{j+1}] = \cdots = [g_j, g_{n-k-1}]$ and both $g_i$ and $g_j$ are in the same generating group.  If so, permute the syllables of the word $g_{k+1}g_{k+2}\cdots g_{n-k-2}$ so that the syllable $g_i$ is now in the front and the syllable $g_j$ is at the rear.  By relabeling we our modified word we may assume that $i = k+1$ and $j = n-2$.  Perform the same process on the word $g_{k+2}g_{k+3}\cdots g_{n-k-3}$, and continue the process until it becomes impossible.  Thus we obtain a (possibly empty, in case the process cannot be performed) initial segment $g_kg_{k+1}\cdots g_{k+p-1}$ and (possibly empty) terminal segment $g_{n-k-p}\cdots g_{n-k-1}$ of the word $g_kg_{k+1}\cdots g_{n-k-1}$ such that all elements of the set $\{g_k, \ldots, g_{k+p-1}\}$ commute with each other and for $k \leq i \leq k+p$ we have that $g_i$ is in the same generating group as $g_{n-i-1}$.  For $k\leq i \leq k+p-1$ let $h_i$ be the element $g_{n-i-1}g_i$ in the generating group containing $g_i$.  By the first process that was performed, we have that $h_i \neq 1$.

Notice that the syllables $g_{k+p-1}$ and $g_{n-k-p}$ cannot be side by side, since otherwise we have that all syllables of $g_kg_{k+1}\cdots g_{n-k-1}$ commute with each other and thus the word $g_0 g_1 \cdots g_{n-1}$ was not reduced.  Thus necessarily $n-k-p> k+p$ and there is a nonempty word $w_0$ in between $g_{k+p-1}$ and $g_{n-k-p}$ such that for each $k \leq i\leq k+p-1$there is a syllable in $w_0$ which does not commute with the syllable $g_i$ (else the word $g_0\cdots g_{n-1}$ was not reduced).  Let $w_1$ be the word $g_k\cdots g_{k+p-1}$, $w_1'$ be the word $g_{n-k-p}\cdots g_{n-k-1}$, $w_1''$ be the word $h_k\cdots h_{k+p-1}$, and $w_2$ be the word $g_0\cdots g_{k-1}$.  We already have that $w_2 w_1w_0w_1'(w_2)^{-1}$ is a reduced word representation for $g$.  The equalities

\begin{center}  $g^2 =_G w_2 w_1w_0w_1'(w_2)^{-1}w_2 w_1w_0w_1'(w_2)^{-1}$

$=_G w_2 w_1w_0w_1'w_1w_0w_1'(w_2)^{-1}$

$=_G w_2w_1w_0w_1''w_0w_1'(w_2)^{-1}$
\end{center}

are clear.

We claim that the word $w_2w_1w_0w_1''w_0w_1'(w_2)^{-1}$ is reduced.  Each of the words $w_0, w_1, w_1', w_1'', w_2, (w_2)^{-1}$ is reduced.  The words $w_0, w_1, w_1', w_2, (w_2)^{-1}$ are reduced since they are subwords of a reduced word.  The word $w_1''$ is reduced since the $h_i$ constituting $w_1''$ are nontrivial, commute with each other, are in the same generating groups as the syllables of $w_1$ (or $w_1'$), and $w_1$ is reduced.  No syllable of the word $w_2$ can be permuted to be next to a syllable of the same generating group in the word $w_1$, the first occurence of $w_0$, or $w_1''$ since the original word $w_2w_1w_0w_1'(w_2)^{-1}$ was reduced and the $h_i$ syllables that constitute the word $w_1''$ are from precisely the same generating groups as those syllables that constitute $w_1$ and $w_1'$.  Also, no syllable of the word $w_2$ can be permuted next to a syllable of the same generating group in the second occurence of $w_0$ since the same is true of the first occurence of $w_0$.  No syllable of $w_2$ can be permuted next to a syllable of the same generating group in the words $w_1'$ and $(w_2)^{-1}$ since the original word $w_2 w_1w_0w_1'(w_2)^{-1}$ was reduced.  That no syllable in $w_1$ can be permuted next to a syllable of the same generating group in any of the words to the right of $w_1$ follows similar lines.  No syllable in the first occurence of $w_0$ can be permuted next to a syllable of the same generating group in $w_1'$ since the subword $w_0w_1$ of the word $w_2 w_1w_0w_1'(w_2)^{-1}$ is reduced.  If the syllable $g_q$ of the first occurence of $w_0$ can be permuted next to a syllable $g_{q'}$ in the second occurence of $w_0$, where $g_q, g_{q'}\in G_v$, then $g_q$ must commute with all the syllables of $w_1$ (and of $w_1'$ and $w_1''$).  By the second process, which was performed on the word $g_k\cdots g_{n-k-1}$, it must be that $g_q=g_{q'}$ and thus in fact $g_q$ commutes with all syllables in the word $g_k\cdots g_{n-k-1}$.  We have that all other syllables of the word $g_k\cdots g_{n-k-1}$ are not in $G_v$ (since $g_k\cdots g_{n-k-1}$ is reduced) and so $\sigma(g_k\cdots g_{n-k-1}) = g_q =\neq 1$ is conjugate to $\sigma(g) = 1$, a contradiction.  The remaining cases are straightforward to check and follow the same lines.

Thus we have that

\begin{center}  $l(g^2) = 2l(w_2) + 3l(w_1) + 2l(w_0) > 2l(w_2) + 2l(w_1) + l(w_0) = l(g)$
\end{center}

since, although $l(w_2)$ and/or $l(w_1)$ might be zero, we demonstrated that $l(w_0)$ is not zero.

\end{proof}

\end{section}

\begin{section}{Thompson's Group}

The group $F$ of R. Thompson is a well-known finitely presented group which satisfies many curious properties, and about which many open problems remain.  The group has no infinitely divisible elements and has the unique extraction of roots property (i.e. if $g^n = h^n$ and $n >0$ then $g = h$).  Thus one might ask the following:

\begin{question*}  Is Thompson's group $F$ n-slender?

\end{question*}

We show that the natural length function defined on diagram groups is not universally monotone.  This does not rule out the possibility of a universally monotone length function, nor the n-slenderness of $F$.

We begin with a discussion of diagram groups (essentially following \cite{GS}), of which $F$ is an example.  Start with an alphabet $X$.  Given two words $u, v$ in the elements of $X$ a \textbf{cell}  $(u \rightarrow v)$ is a directed planar graph consisting of exactly two directed paths with the same initial and terminal vertices and which share no other vertices, which share no edges, and with the two paths labeled by the words $u$ and $v$.  The path labeled by $u$ is called the top path and that labeled by $v$ is called the bottom path.  A \textbf{trivial diagram} is a single directed path labeled by a word in the elements of $X$; if that word is $u$ we denote the trivial diagram by $\epsilon(u)$.  For a trivial diagram we say that the path defining the trivial diagram is both the top and the bottom path.
%
%

\begin{center}

\unitlength 1mm 
\linethickness{0.7pt}
\ifx\plotpoint\undefined\newsavebox{\plotpoint}\fi 
\begin{picture}(47.75,50)(0,0)
\thicklines
\qbezier(0,28.25)(22.625,58.625)(48.5,28.25)
\qbezier(48.5,28.25)(21.5,0)(0,27.75)
\put(23,48.5){\makebox(0,0)[cc]{u}}
\put(22.625,8.75){\makebox(0,0)[cc]{v}}
\put(0,28.25){\circle*{2}}
\put(48.5,28.25){\circle*{2}}
\end{picture}

\end{center}

%
%

We declare that diagrams are defined only up to planar isotopy and that cells and trivial diagrams are diagrams.  Thus the diagrams that we have so far have an initial and a terminal vertex, that is, any two maximal paths begin and end at the same vertices. Also, each diagram has a top and a bottom path.  Given a diagram $\Delta$ we let $\iota(\Delta)$ and $\tau(\Delta)$ denote the initial and terminal vertices respectively.  All diagrams will similarly have an initial and terminal vertex as well as a top and a bottom path.  In addition to the cells and the trivial diagrams, we close the collection of diagrams under the following three operations:

1.  Addition.  Given two diagrams $\Delta_0$ and $\Delta_1$ we let $\Delta_0 + \Delta_1$ be the planar graph created by identifying $\tau(\Delta_0)$ with $\iota(\Delta_1)$.  Thus the top path of $\Delta_0 + \Delta_1$ is the concatenation of the top paths of $\Delta_0$ and $\Delta_1$, and similarly for the bottom paths.  Also we have $\iota(\Delta_0 + \Delta_1) = \iota(\Delta_0)$ and $\tau(\Delta_0 + \Delta_1) = \tau(\Delta_1)$.  The operation $+$ is clearly associative.  If $u = x_0x_1\cdots x_k$ then we may write $\epsilon(u) = \epsilon(x_0) + \epsilon(x_1) + \cdots + \epsilon(x_k)$.

\unitlength 1mm 
\linethickness{0.4pt}
\ifx\plotpoint\undefined\newsavebox{\plotpoint}\fi 
\begin{picture}(124.553,64)(0,0)
\thicklines
\qbezier(0,29)(16.214,60.125)(33,29)
\qbezier(33,29)(15.427,1.5)(0,29)
\put(16.477,50){\makebox(0,0)[cc]{u}}
\put(16.301,10.25){\makebox(0,0)[cc]{v}}
\qbezier(50,29)(60.401,64)(71,29)
\qbezier(71,29)(58.826,0)(50,29)
\put(60.401,51.25){\makebox(0,0)[cc]{u'}}
\put(60.576,8.75){\makebox(0,0)[cc]{v'}}

\put(77, 30){\makebox(0,0)[cc]{$=$}}
\qbezier(84,29)(94.701,51.5)(106,29)
\qbezier(106,29)(94.876,9.75)(84,29)
\qbezier(123,29)(113.864,60.875)(106,29)
\qbezier(106,29)(111.064,6.375)(123,29)
\put(94.701,45){\makebox(0,0)[cc]{u}}
\put(114.302,48.5){\makebox(0,0)[cc]{u'}}
\put(94.352,16.5){\makebox(0,0)[cc]{v}}
\put(114.302,15.5){\makebox(0,0)[cc]{v'}}
\put(0,29){\circle*{2}}
\put(33,29){\circle*{2}}
\put(50,29){\circle*{2}}
\put(71,29){\circle*{2}}
\put(84,29){\circle*{2}}
\put(106,29){\circle*{2}}
\put(123,29){\circle*{2}}

\put(40, 30){\makebox(0,0)[cc]{$+$}}
\end{picture}

\newpage

2.  Multiplication.  Given diagrams $\Delta_0$ and $\Delta_1$ such that the bottom path of $\Delta_0$ has the same label as the top path of $\Delta_1$ we let $\Delta_0 \circ \Delta_1$ be the planar graph obtained by identifying the bottom path of $\Delta_0$ with the top of $\Delta_1$.  Thus under the identification we have $\iota(\Delta_0\circ\Delta_1) = \iota(\Delta_0) = \iota(\Delta_1)$ (and similarly for $\tau$), the top of $\Delta_0\circ \Delta_1$ is the top of $\Delta_0$ and the bottom of $\Delta_0 \circ \Delta_1$ is the bottom of $\Delta_1$.

\unitlength 1mm 
\linethickness{0.4pt}
\ifx\plotpoint\undefined\newsavebox{\plotpoint}\fi 
\begin{picture}(112.276,77.375)(0,0)
\qbezier(39,54)(21.901,77.375)(0,54)
\qbezier(0,54)(24.276,37.75)(39,54)
\put(20,68){\makebox(0,0)[cc]{u}}
\put(20,42){\makebox(0,0)[cc]{v}}
\put(20, 36){\makebox(0,0)[cc]{$\circ$}}
\put(54, 36){\makebox(0,0)[cc]{$=$}}
\qbezier(71,39)(86.401,65.25)(100,39)
\qbezier(0,17)(19.901,39.25)(39,17)
\qbezier(39,17)(17.901,0)(0,17)
\put(20,30.625){\makebox(0,0)[cc]{v}}
\put(20,6.375){\makebox(0,0)[cc]{w}}
\qbezier(71,39)(83.526,9.125)(100,39)
\put(71,39){\line(1,0){30}}
\put(84,54.375){\makebox(0,0)[cc]{u}}
\put(84,40.125){\makebox(0,0)[cc]{v}}
\put(84,20.625){\makebox(0,0)[cc]{w}}
\put(0,54){\circle*{2}}
\put(39,54){\circle*{2}}
\put(39,17){\circle*{2}}
\put(0,17){\circle*{2}}
\put(71,39){\circle*{2}}
\put(100,39){\circle*{2}}
\end{picture}

3.  Inversion.  Given a diagram $\Delta$ we define $\Delta^{-1}$ to be the diagram obtained by flipping the diagram $\Delta$ about a horizontal line, so that the top path becomes the bottom path and vice versa.

By definition, the class of diagrams (over $X$) is built out of cells and trivial diagrams using the above operations.  If we wish, we can restrict our attention to those diagrams which are built only from trivial diagrams and cells in a set $P$ and the three operations above and let $D(P)$ denote this class.  If a diagram $\Delta$ has two cells such that the top of the second is identified with the bottom of the first, and the first and second cells are inverses of each other, then we call this pair of cells a \textbf{dipole}.  Notice that if one eliminates the two cells from the diagram and identifies the top of the first cell with the bottom of the second, then we have a new diagram $\Delta'$ and say that $\Delta$ and $\Delta'$ are equivalent.  This induces an equivalence relation on $D(P)$ by making the relation reflexive, symmetric and transitive.  We say $\Delta$ is \textbf{reduced} in case $\Delta$ has no dipoles, and note that every diagram is equivalent to a unique reduced diagram (see \cite{GS}).

Given a word $u$ and a collection of cells $P$ we let $D(P, u)$ denote the collection of reduced diagrams built by using cells in $P$, trivial diagrams, and the above three operations and whose top and bottom paths are labeled by the word $u$.  This forms a group by letting the binary operation be given by $\Delta_0 \Delta_1 = \Delta$, where $\Delta$ is the reduced diagram equivalent to $\Delta_0 \circ \Delta_1$ (see \cite{GS} for a proof).  The identity element and the inverse operation are clear.

Now we state a characterization of $F$ as a diagram group.  We shall use this as our working definition of $F$, and the isomorphism of $F$ with the group we define is given in \cite{GS}.  Letting $X = \{x\}$ and $P = \{(x^2 \rightarrow x)\}$ it is shown that $D(P, x) \simeq F$.  Given a reduced diagram $\Delta \in F$ we let $l(\Delta)$ be the number of cells in $\Delta$.  It is easy to check that $l$ is a length function.

We now show that $l$ is not universally monotone.  Letting $n>1$ be given we give an example of a diagram $\Delta$ such that $l(\Delta)> l(\Delta^n)$.  Since the alphabet $X$ includes only the letter $x$, we may assume that each arc in our diagrams is labeled by the letter $x$ as read from left to right.  Let $\rho$ denote the $(1, 2)$ diagram.  Let $\theta$ be the reduced diagram with $4$ cells given below.
\vspace{.75in}

\begin{center}

\unitlength 1mm 
\linethickness{0.7pt}
\ifx\plotpoint\undefined\newsavebox{\plotpoint}\fi 
\begin{picture}(50,55)(0,0)
\put(0,47.375){\line(1,0){51.75}}
\qbezier(0,47)(21.5,72.125)(37,47)
\qbezier(52,47)(34.375,23)(17,47)
\qbezier(0,47)(24.75,93)(52,47)
\qbezier(52,47)(26.25,0)(0,47)
\put(52,47){\circle*{2}}
\put(37,47){\circle*{2}}
\put(17,47){\circle*{2}}
\put(0,47){\circle*{2}}
\end{picture}

\end{center}

%
%

For $n \in \mathbb{Z}$ let $\theta^n$ denote the reduced diagram associated with multiplying $\theta$ with itself $n$ times.  The diagrams for $\theta^2$ and respectively for $\theta^m$ for $m \geq 1$ are straightforward to compute and are pictured below having $6$ and $2 +2m$ cells, resp.

\begin{center}

\unitlength 1mm 
\linethickness{0.4pt}
\ifx\plotpoint\undefined\newsavebox{\plotpoint}\fi 
\begin{picture}(120.531,80)(0,0)
\put(68,47){\line(1,0){51}}
\put(0,47){\line(1,0){51}}
\qbezier(68,47)(88.531,72.125)(104,47)
\qbezier(0,47)(21.031,72.375)(37,47)
\qbezier(120,47)(101.406,23)(84,47)
\qbezier(52,47)(33.906,23.25)(17,47)
\qbezier(68,47)(91.781,93)(120,47)
\qbezier(0,47)(24.281,93.25)(52,47)
\qbezier(120,47)(93.281,0)(68,47)
\qbezier(52,47)(25.781,.25)(0,47)
\put(68,47){\circle*{2}}
\put(0,47){\circle*{2}}
\put(84,47){\circle*{2}}
\put(17,47){\circle*{2}}
\put(104,47){\circle*{2}}
\put(37,47){\circle*{2}}
\put(120,47){\circle*{2}}
\put(52,47){\circle*{2}}
\qbezier(0,47)(17.656,63.875)(26,47)
\qbezier(26,47)(33.781,31)(51,47)
\put(26,47){\circle*{2}}
\qbezier(68,47)(84.281,57.375)(91,47)
\qbezier(91,47)(103.531,27.75)(120,47)
\put(91,47){\circle*{2}}
\qbezier(68,47)(86.656,67.625)(100,47)
\qbezier(100,47)(105.156,39.875)(120,47)
\put(100,47){\circle*{2}}

\multiput(94.461,44.055)(.6,.35){6}{{\rule{.4pt}{.4pt}}}
\end{picture}

\end{center}

Define $\Delta_n$ to be $\underbrace{\theta^{-1} + \theta^{-1}\cdots + \theta^{-1}}_{n-1\text{times}}+\theta^{n-1}$.  Select $k\in \omega$ large enough that $2n^2< 2k +2$.  Let $k_1, k_2 >n+1$ be such that $1+k_1+nk+k_2 = 2^m$ for some $m\in \omega$.  Let $\Psi$ be the diagram pictured below with top path of length $1$ and bottom path of length $2^m +2$.

\unitlength 1mm 
\linethickness{0.4pt}
\ifx\plotpoint\undefined\newsavebox{\plotpoint}\fi 
\begin{picture}(129.031,60)(0,0)
\put(0,0){\line(1,0){127}}
\qbezier(0,0)(7,14.825)(10,0)
\qbezier(4,0)(7,7.65)(10,0)
\qbezier(10,0)(13.675,16.838)(17,0)
\qbezier(17,0)(20.575,16.488)(23,0)
\qbezier(23,0)(26.8,16.488)(30,0)
\qbezier(30,0)(32.275,17.188)(35,0)
\qbezier(35,0)(39.025,18.938)(42,0)
\qbezier(42,0)(44.95,18.5)(47,0)
\qbezier(47,0)(51.925,17.713)(54,0)
\qbezier(54,0)(59.8,21.387)(64,0)
\qbezier(64,0)(67.45,19.55)(70,0)
\qbezier(70,0)(75.625,19.813)(78,0)
\qbezier(0,0)(13,30.137)(17,0)
\qbezier(17,0)(24.175,33.287)(30,0)
\qbezier(30,0)(36.475,36.262)(42,0)
\qbezier(42,0)(49.075,42.387)(54,0)
\qbezier(54,0)(63.25,41.688)(70,0)
\qbezier(0,0)(20.65,59.275)(30,0)
\qbezier(30,0)(44.65,68.2)(54,0)
\qbezier(78,0)(84.025,24.012)(85,0)
\qbezier(70,0)(81.175,45.1)(85,0)
\qbezier(54,0)(76.6,76.95)(85,0)
\qbezier(0,0)(38.65,105.212)(54,0)

\put(47, 42){\makebox(0,0)[cc]{$\udots$}}
\put(49, 44){\makebox(0,0)[cc]{$\udots$}}

\put(73,46){\makebox(0,0)[cc]{$\vdots$}}
\put(73,48){\makebox(0,0)[cc]{$\vdots$}}

\put(88, 6){\makebox(0,0)[cc]{$\cdots$}}
\put(90, 6){\makebox(0,0)[cc]{$\cdots$}}

\put(105, 6){\makebox(0,0)[cc]{$\cdots$}}

\qbezier(127,0)(121.45,21.3)(115,0)
\qbezier(115,0)(110.35,23.05)(107,0)
\qbezier(115,0)(120.4,9.75)(122,0)
\qbezier(107,0)(114.025,45.1)(127,0)
\put(0,0){\circle*{2}}
\put(4,0){\circle*{2}}
\put(7,0){\circle*{2}}
\put(10,0){\circle*{2}}
\put(13,0){\circle*{2}}
\put(17,0){\circle*{2}}
\put(20,0){\circle*{2}}
\put(23,0){\circle*{2}}
\put(27,0){\circle*{2}}
\put(30,0){\circle*{2}}
\put(32.5,0){\circle*{2}}
\put(35,0){\circle*{2}}
\put(39,0){\circle*{2}}
\put(42,0){\circle*{2}}
\put(45,0){\circle*{2}}
\put(47,0){\circle*{2}}
\put(51,0){\circle*{2}}
\put(54,0){\circle*{2}}
\put(59,0){\circle*{2}}
\put(64,0){\circle*{2}}
\put(67,0){\circle*{2}}
\put(70,0){\circle*{2}}
\put(74,0){\circle*{2}}
\put(78,0){\circle*{2}}
\put(81,0){\circle*{2}}
\put(85,0){\circle*{2}}
\put(107,0){\circle*{2}}
\put(111,0){\circle*{2}}
\put(115,0){\circle*{2}}
\put(118.5,0){\circle*{2}}
\put(122,0){\circle*{2}}
\put(127,0){\circle*{2}}
\end{picture}

Let $\chi$ be the diagram $\rho + \epsilon{k_1}+ \underbrace{\Delta_n + \Delta_n + \cdots + \Delta_n}_{k \text{times}} + \epsilon(k_2) + \rho^{-1}$ and finally let $\Delta = \Psi \circ \chi \circ \circ \Psi^{-1}$.  It is straightforward to check that $\Delta$ is a reduced diagram with top and bottom path of length $1$, so $\Delta$ is in the Thompson group $F$.  To compute $\l(\Delta)$ we note that each $\Delta_n$ has $2+6(n-1)$ cells in it, so that $\chi$ has $1 + k(2 + 6(n-1)) +1$ cells in it.  The diagram $\Psi$ contains $2^m +1$ cells, so that $\l(\Delta) = 1 + k(2 + 6(n-1)) +1 + 2(2^m +1)= 4-4k +6kn+2^{m+1}$.  Clearly $\Delta^n$ is equivalent to $\Psi\circ  \chi^n \circ \Psi$.  We count the number of cells in $\Psi\circ  \chi^n \circ \Psi$ to obtain an upper bound on $l(\Delta^n)$.   First of all we have $2(2^m+1)$ cells in $\Psi$ and $\Psi^{-1}$ combined.  The reduced diagram $\chi^n$ is given by the equality 
\begin{center}
$\chi^n = \Gamma_1+ \epsilon(k_1 - n)+ \theta^{-1}+ \theta^{-2}+ \cdots + \theta^{1-n} + \epsilon((k-1)n +2) + \theta + \theta^{2} +\cdots + \theta^{n-1} + \epsilon(k_2) + \Gamma_2$ 
\end{center}
where $\Gamma_1$ is the diagram with $n$ cells pictured below and $\Gamma_2$ is the rotation of $\Gamma_1$ by $180$ degrees.
\begin{center}

\unitlength 1mm 
\linethickness{0.4pt}
\ifx\plotpoint\undefined\newsavebox{\plotpoint}\fi 
\begin{picture}(61.781,50)(0,0)
\put(0,0){\line(1,0){59.25}}
\qbezier(0,0)(29.281,68.5)(60,0)
\qbezier(0,0)(11.781,17.25)(16,0)
\qbezier(0,0)(15.781,26.75)(21,0)
\qbezier(0,0)(18.156,37.875)(30,0)
\put(0,0){\circle*{2}}
\put(16,0){\circle*{2}}
\put(8,0){\circle*{2}}
\put(21,0){\circle*{2}}
\put(30,0){\circle*{2}}
\put(60,0){\circle*{2}}

\put(43,9){\makebox(0,0)[cc]{$\cdots$}}
\put(41,9){\makebox(0,0)[cc]{$\cdots$}}
\end{picture}

\end{center}

Thus $\chi^n$ has $n + (4+6+ \cdots + (2+2(n-1))) + (4+6+ \cdots + (2+2(n-1))) + n = 2n + 2(n^2 - n) = 2n^2$ cells.  Hence $l(\Delta^n) \leq 2n^2 + 2(2^m+1)< 4-4k +6kn +2^{m+1} = l(\Delta)$ by our choice of $k$, as desired.

\end{section}

\begin{section}{A Family of Non-Examples}

Although the uniformly monotone condition gives a very nice sufficient condition for n-slenderness, it is not a necessary condition.  We give an example of a very basic family of groups that are n-slender but which are not uniformly monotone.

The countable slender abelian groups have the following criterion (see \cite{F}):

\begin{lemma}  A countable abelian group $A$ is slender if and only if $A$ is torsion-free and reduced (i.e. $\bigcap_{m = 1}^{\infty} mA = \bigcap_{m = 1}^{\infty}\{ma: a\in A\}$ is trivial.)
\end{lemma}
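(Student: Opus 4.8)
The plan is to prove both implications, organizing everything around one elementary observation. Write $P=\prod_{\omega}\mathbb{Z}$ and $S=\bigoplus_{\omega}\mathbb{Z}$ for the finitely supported subgroup, generated by the standard vectors $e_n$. The observation is that a \emph{nonzero} homomorphism $\phi\colon P\to A$ with $\phi|_S=0$ witnesses the failure of slenderness: were $\phi=\phi\circ p_N$, then $\phi$ would be determined by its values on $e_0,\dots,e_N$, all of which vanish, forcing $\phi=0$. Consequently, for the ``only if'' direction it suffices, whenever $A$ is not torsion-free or not reduced, to manufacture such a witness; and for the ``if'' direction it will suffice to show that for countable torsion-free reduced $A$ one has (i) $\phi(e_n)=0$ for all but finitely many $n$, and (ii) $\phi|_S=0$ implies $\phi=0$. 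Indeed, granting these, I would choose $N$ beyond the support of $(\phi(e_n))_n$ and apply (ii) to the restriction to the tail $P^{N}\cong P$ (whose basis $\phi$ then annihilates) to conclude $\phi=\phi\circ p_N$.

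For the ``only if'' direction I would treat two cases. If $A$ has a nontrivial torsion element, fix $n>1$ with $\mathbb{Z}/n\hookrightarrow A$ and a nonprincipal ultrafilter $\mathcal{U}$ on $\omega$; the ultralimit $(x_j)_j\mapsto \lim_{\mathcal{U}}(x_j\bmod n)$ is a homomorphism $P\to\mathbb{Z}/n$ that kills every $e_k$ (since $\omega\setminus\{k\}\in\mathcal{U}$) while sending $(1,1,1,\dots)$ to $1$; composing with the inclusion gives the witness. If instead $A$ is torsion-free but not reduced, pick $0\neq a\in\bigcap_{m\geq 1}mA$; torsion-freeness makes the division witnesses $q\,a_q=a$ unique and coherent, so $p/q\mapsto p\,a_q$ defines an injection $\mathbb{Q}\hookrightarrow A$. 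On the source side, $P/S$ is torsion-free and the class of $(n!)_n$ is nonzero and divisible by every $m$, so by the same coherence argument $P/S$ contains a copy of $\mathbb{Q}$; being divisible it is a direct summand, yielding a nonzero map $P\to P/S\to\mathbb{Q}$ that kills $S$. Composing with $\mathbb{Q}\hookrightarrow A$ again produces the witness.

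For the ``if'' direction let $A$ be countable, torsion-free and reduced and let $\phi\colon P\to A$. First I would establish (i) by Specker's divisibility method from \cite{S}: assuming infinitely many of the $\phi(e_n)$ are nonzero and passing to a subproduct where all are, choose coefficients divisible by rapidly growing factorials so that, after peeling off an initial segment, a single fixed image is congruent modulo $m!\,A$ to a finite partial sum for every $m$; since $\bigcap_m mA=0$ this forces a nonzero element of $A$ to be infinitely divisible, a contradiction. This reduces the entire problem to statement (ii), namely that $\phi|_S=0$ implies $\phi=0$, equivalently that $\mathrm{Hom}(P/S,A)=0$.

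The hard part will be (ii), which is exactly where countability and the suppression of $p$-adic phenomena enter. My plan is to factor $\phi$ through $P/S$, annihilate its maximal divisible subgroup using reducedness (the image of a divisible group in a reduced group is trivial, so $\mathrm{Hom}(\mathbb{Q},A)=0$), and then confront the reduced part of $P/S$, which is an algebraically compact (cotorsion) group assembled from the $p$-adic integers $\hat{\mathbb{Z}}_p$. The key input is that a countable torsion-free reduced group is cotorsion-free: it contains no copy of the uncountable $\hat{\mathbb{Z}}_p$ or of $P$, and hence admits no nonzero homomorphism from a cotorsion group. This is precisely the content of the classical Nunke characterization of slenderness (no subgroup isomorphic to $\mathbb{Q}$, to $\hat{\mathbb{Z}}_p$, or to $P$), and I expect the cleanest route is to invoke it from \cite{F} and observe that countability automatically removes the two uncountable obstructions, leaving reducedness as the only remaining condition; the alternative, self-contained route would reprove that homomorphisms out of the compact part are forced to vanish, which is the genuinely delicate step of the argument.
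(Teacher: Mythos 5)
The paper offers no proof of this lemma at all --- it is quoted from Fuchs \cite{F} (it is Sąsiada's theorem, equivalently the countable case of the Nunke characterization of slenderness) --- so your proposal can only be judged on its own merits. Your ``only if'' direction is complete and correct: the ultrafilter limit into $\mathbb{Z}/n$ disposes of torsion, and in the torsion-free non-reduced case the embedding $\mathbb{Q}\hookrightarrow A$ built from a nonzero $a\in\bigcap_m mA$, together with the copy of $\mathbb{Q}$ in $P/S$ generated by the class of $(n!)_n$ and the fact that divisible subgroups are direct summands, produces a nonzero map $P\to A$ killing $S$, which by your opening observation defeats slenderness. The reduction of the ``if'' direction to your statements (i) and (ii) is also correct, and your treatment of (ii) --- quoting Balcerzyk's description of $P/S$ as algebraically compact and the fact that a countable torsion-free reduced group admits no nonzero homomorphism from a cotorsion group --- is legitimate, though it is an appeal to the literature rather than a proof; that is no worse than the paper itself, which cites the entire lemma.

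The genuine gap is in step (i). As sketched, your argument never uses countability of $A$, and (i) is false without it: $A=\prod_{\omega}\mathbb{Z}$ is itself torsion-free and reduced (indeed $\bigcap_m mP=0$), yet the identity map satisfies $\phi(e_n)=e_n\neq 0$ for every $n$. Concretely, the congruences $\phi(x)\equiv\sum_{n<m}c_n n!\,\phi(e_n) \pmod{m!A}$ for $x=\sum_n c_n n! e_n$ do not by themselves ``force a nonzero element of $A$ to be infinitely divisible'': comparing two coefficient sequences with the same image only yields relations of the form $\phi(e_m)\in(m+1)A$, i.e.\ divisibility of \emph{varying} elements by \emph{single} integers, never of one fixed element by all integers. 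Specker's original method closes this loop for $A=\mathbb{Z}$ via the archimedean property (both sides of the congruence have absolute value less than $m!/2$, forcing equality), which has no analogue in a general countable group. The repair is Sąsiada's diagonalization, and it is exactly where countability enters: enumerate $A=\{d_1,d_2,\dots\}$ and choose the coefficients in blocks $[n_k,n_{k+1})$ so that $\phi(x)\not\equiv d_k \pmod{n_{k+1}!\,A}$ (later blocks cannot disturb this congruence class). If at stage $k$ no choice of block achieves this, then subtracting two candidate blocks gives $n_k!\,\phi(e_{n_k})\in n_{k+1}!\,A$ for every $n_{k+1}>n_k$, whence by torsion-freeness the fixed nonzero element $\phi(e_{n_k})$ lies in $\bigcap_j jA=0$, a contradiction; so the construction runs to completion and produces $\phi(x)\neq d_k$ for all $k$, i.e.\ $\phi(x)\notin A$, which is absurd. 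With this emendation your outline becomes a correct proof.
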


\begin{theorem}  The groups $BS(1, n)$ are n-slender but not u.m. for $n>1$.  Moreover, these groups are HNN extensions of the u.m. group $\mathbb{Z}$.
\end{theorem}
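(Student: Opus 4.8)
The plan is to treat the three assertions separately, spending almost all the effort on non-u.m.-ness.

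The HNN structure and n-slenderness are quick. Writing $BS(1,n) = \langle a,t \mid tat^{-1}=a^n\rangle$ exhibits it as the HNN extension of $\langle a\rangle\cong\mathbb{Z}$ with stable letter $t$ and associated subgroups $\langle a\rangle,\langle a^n\rangle$ glued by $a\mapsto a^n$; that $\mathbb{Z}$ is u.m. is witnessed by $l(m)=|m|$, since $l(g^K)=K|m|\ge|m|+r$ once $K\ge r+1$ (as $|m|\ge1$ for $m\neq0$), so $\mathbb{Z}$ is n-slender by Theorem \ref{u.m.}. For the whole group I would use the short exact sequence $1\to\mathbb{Z}[1/n]\to BS(1,n)\to\mathbb{Z}\to1$, whose kernel is the normal closure $\langle\langle a\rangle\rangle$, isomorphic to the additive group $\mathbb{Z}[1/n]$ (the conjugates $t^{-k}at^{k}$ playing the role of $1/n^k$). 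The group $\mathbb{Z}[1/n]$ is torsion-free and reduced, since for any prime $q\nmid n$ one already has $\bigcap_j q^j\mathbb{Z}[1/n]=0$; hence it is slender by the stated criterion for countable abelian groups, so n-slender. As the quotient $\mathbb{Z}$ is n-slender too, Lemma \ref{extension} yields that $BS(1,n)$ is n-slender.

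For non-u.m.-ness, suppose $l$ is a u.m. length function; the feature to exploit is that $a$ is infinitely divisible along the $t$-direction. From $tat^{-1}=a^n$ the elements $z_k:=t^{-k}at^{k}$ satisfy $z_k^{\,n}=z_{k-1}$ and hence $z_k^{\,n^k}=a$, with every $z_k\neq1$ since the group is torsion-free; thus $a$ admits an $n^k$-th root for every $k$. Because $z_k=t^{-1}z_{k-1}t$, the lengths $s_k:=l(z_k)$ also satisfy $|s_k-s_{k-1}|\le 2l(t)$. The clean target is a descending chain: if the u.m. inequality could be applied to the relation $z_k^{\,n}=z_{k-1}$ it would give $l(z_{k-1})\ge l(z_k)+1$, whence $l(a)=s_0\ge s_k+k\ge k$ for all $k$, which is absurd. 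Equivalently, setting $r:=l(a)+1$ produces an exponent $K$ with $l(g^{K})>l(a)$ for every $g\neq1$, and if $a$ were a $K$-th power, say $a=h^{K}$ with $h\neq1$, then $l(a)=l(h^{K})>l(a)$ is the desired contradiction.

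The hard part, and essentially the only content, is that the exponent $K_r$ delivered by u.m. need not be $n$ (indeed $K_r$ is typically forced to grow exponentially in $r$), and $a$ is a $K$-th power precisely when every prime divisor of $K$ divides $n$; so I cannot feed $z_k^{\,n}=z_{k-1}$ directly into the hypothesis. The route I would pursue is to first restrict $l$ to the normal subgroup $\mathbb{Z}[1/n]$ (a restriction of a u.m. function is again u.m.) and rule out a u.m. length function there. In that setting multiplication by $n$ is realized by conjugation by $t$, hence is a $2l(t)$-quasi-isometry of $(\mathbb{Z}[1/n],l)$; the plan is to combine this quasi-invariance with the infinite $n$-divisibility of $1\in\mathbb{Z}[1/n]$ so as to force the universal exponent to be matched against divisibility by $n$ (for instance, to exhibit some $r$ whose $K_r$ has all prime divisors dividing $n$, equivalently a nontrivial infinitely $K_r$-divisible element), at which point the descending-chain or $a=h^{K}$ contradiction closes the argument. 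Overcoming this exponent-matching obstruction is where I expect all the difficulty to lie.
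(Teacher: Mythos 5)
Your first two assertions are handled correctly and exactly as in the paper: you use the same short exact sequence $1 \to \mathbb{Z}[\frac{1}{n}] \to BS(1,n) \to \mathbb{Z} \to 1$, the same verification that the kernel $\mathbb{Z}[\frac{1}{n}]$ is countable, torsion-free and reduced (hence slender, hence n-slender), and the same appeal to Lemma \ref{extension}; the HNN description and the universal monotonicity of $\mathbb{Z}$ are likewise fine.

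The non-u.m.\ claim, however, is not proved by your proposal, and you say as much yourself. What you actually establish is only the matched-exponent case: if some universal exponent $K=K_r$ (with $r=l(a)+1$) has all of its prime divisors dividing $n$, then $a$ has a $K$-th root $h\neq 1$ in $\mathbb{Z}[\frac{1}{n}]$, and $l(a)=l(h^{K})\geq l(h)+r>l(a)$ is a contradiction. The complementary case --- a hypothetical u.m.\ length function all of whose exponents $K_r$ have a prime factor coprime to $n$, so that the u.m.\ inequality can never be applied across any of the relations $z_k^{\,n}=z_{k-1}$ --- is exactly your ``exponent-matching obstruction,'' and the route you sketch for it (conjugation by $t$ is an additive quasi-isometry of the kernel; ``force the universal exponent to be matched against divisibility by $n$'') is never carried out, nor is it clear what intermediate statement it would establish. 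So the key assertion of the theorem is left open in the proposal. For comparison, the paper disposes of this step in two sentences: each element of $\mathbb{Z}[\frac{1}{n}]$ is infinitely divisible, and a u.m.\ group has no nontrivial infinitely divisible element. Notice that this assertion sits precisely on the pressure point you identified: if ``infinitely divisible'' is read as ``divisible by every positive integer,'' the claim about u.m.\ groups is immediate (given any $r$, write $g=h^{K_r}$ with $h\neq 1$, so $l(g)\geq l(h)+r\geq r$ for every $r$), but then elements of $\mathbb{Z}[\frac{1}{n}]$ are not infinitely divisible, since they admit roots only of orders whose prime factors divide $n$; if instead it is read as ``divisible by arbitrarily large integers,'' elements of $\mathbb{Z}[\frac{1}{n}]$ qualify, but the claim about u.m.\ groups becomes exactly the unmatched-exponent statement you (correctly) found difficult. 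So your diagnosis targets a step that the paper itself treats very briefly; but diagnosing the difficulty is not the same as overcoming it, and as written the proposal proves the HNN statement and n-slenderness while leaving ``$BS(1,n)$ is not u.m.'' unproven.
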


\begin{proof}  Recall that $BS(1, n) = \langle a,b| bab^{-1} = a^n\rangle$.  The retraction map $q: BS(1,n) \rightarrow \langle b\rangle$ defined by $b\mapsto b$ and $a\mapsto 1$ has kernel which is easily seen to be isomorphic to the additive group of the $n$-adic rational numbers, $\mathbb{Z}[\frac{1}{n}]$.

The group $\mathbb{Z}[\frac{1}{n}]$ is clearly torsion-free and countable.  To see that $\mathbb{Z}[\frac{1}{n}]$ is reduced we notice that an element $\frac{r}{n^k}$ with $r\in \mathbb{Z}$ is not a $p$-th power for any $p$ that divides neither $n$ nor $r$.  Thus $\mathbb{Z}[\frac{1}{n}]$ is slender and therefore n-slender, and since $\mathbb{Z} \simeq \langle b\rangle$ is also n-slender we know that $BS(1, n)$ is n-slender as an n-slender by n-slender group (Lemma \ref{extension}).

However we know that $BS(1, n)$ cannot be u.m. since it contains a subgroup isomorphic to $\mathbb{Z}[\frac{1}{n}]$.  Each element of $\mathbb{Z}[\frac{1}{n}]$ is infinitely divisible, and a u.m. group cannot have any infinitely divisible elements besides the identity element.

\end{proof}

\end{section}

\bibliographystyle{amsplain}

\end{document}